\definecolor{darkgreen}{rgb}{0,0.5,0}
\definecolor{darkred}{rgb}{0.7,0,0}
\theoremstyle{plain}
\newtheorem{lemma}{Lemma}[section]
\newtheorem{thm}[lemma]{Theorem}
\newtheorem{prop}[lemma]{Proposition}
\newtheorem{cor}[lemma]{Corollary}
\theoremstyle{definition}
\newtheorem{rmk}[lemma]{Remark}
\numberwithin{equation}{section}
\newcommand{\de}{\delta}
\newcommand{\om}{\omega}
\newcommand{\Om}{\Omega}
\newcommand{\si}{\sigma}
\newcommand{\Si}{\Sigma}
\renewcommand{\th}{\theta}
\newcommand{\Th}{\Theta}
\newcommand{\R}{\ensuremath{{\mathbb R}}}
\newcommand{\Z}{\ensuremath{{\mathbb Z}}}
\newcommand{\C}{\ensuremath{{\mathbb C}}}
\newcommand{\downto}{\downarrow}
\newcommand{\tensor}{\otimes}
\DeclareMathOperator{\inj}{inj}
\newcommand{\beq}{\begin{equation}}
\newcommand{\eeq}{\end{equation}}
\newcommand{\beqs}{\begin{equation*}}
\newcommand{\eeqs}{\end{equation*}}
\newcommand{\beqa}{\begin{equation}\begin{aligned}}
\newcommand{\eeqa}{\end{aligned}\end{equation}}
\newcommand{\beqas}{\begin{equation*}\begin{aligned}}
\newcommand{\eeqas}{\end{aligned}\end{equation*}}
\newcommand{\brmk}{\begin{rmk}}
\newcommand{\ermk}{\end{rmk}}
\newcommand{\partref}[1]{\hbox{(\csname @roman\endcsname{\ref{#1}})}}
\newcommand*\arsinh{\mathop{\mathrm{arsinh}}\nolimits}
\newcommand{\norm}[1]{\Vert#1\Vert} 
\newcommand{\abs}[1]{\left\vert#1\right\vert}
\newcommand{\na}{\nabla}
\newcommand{\Qu}{\mathcal{Q}}
\newcommand{\Hol}{\mathcal{H}}
\newcommand{\pbz}{\partial_{\bar z}}
\newcommand{\thin}{\text{-thin}}
\newcommand{\thick}{\text{-thick}}
\newcommand{\Col}{\mathcal{C}}
\begin{document}
\title[ Poincar\'e estimate for quadratic differentials ]{A uniform Poincar\'e estimate for\\ quadratic differentials on closed surfaces}
%\thanks{20 April 2009.}

%\thanks{1991 Mathematics subject classification: }
\author{Melanie Rupflin and Peter Topping}
%\date{\today~(@\the\time mpm)}
\date{\today}
%\date{20 April 2009}

\begin{abstract}
We prove a uniform estimate, valid for every closed Riemann surface of genus at least two,
that bounds the distance of any quadratic differential to the finite dimensional space of holomorphic quadratic differentials 
in terms of its antiholomorphic derivative.
\end{abstract}
\maketitle
\section{Introduction}

Given any %smooth 
closed %\gcmt{orientable should be clear if we consider Riemann surfaces as changes in variables are holomorphic}
Riemann surface $(M,c)$, $c$ a complex structure, we consider the complex vector space $\Qu(M,c)$ of smooth quadratic differentials on $(M,c)$, 
that is of complex %$(0,2)$ 
tensors that with respect to a local complex coordinate $z$ take the form
%are given in complex coordinate charts $z$ as 
$$\Psi=\psi dz^2, \quad \psi \text{ a smooth function}.$$
Of particular importance is the subspace $\Hol(M,c)$ of those quadratic differentials that are represented in each complex coordinate chart 
by a holomorphic function. This space of holomorphic quadratic differentials has finite (complex) dimension 
$\dim(\Hol(M,c))=0$ for surfaces of genus $\gamma=0$, $\dim(\Hol(M,c))=1$ if $\gamma=1$ and 
$$\dim(\Hol(M,c))=3(\gamma-1) \text{ for } \gamma\geq 2 $$
by the Riemann-Roch theorem. It canonically represents the tangent space to Teichm\"uller space $\tau(M)$ 
at the point $[(M,c)]$, with $\Hol(M,c)$ equipped with the $L^2$ inner product,  
$$\langle \phi dz^2,\psi dz^2\rangle_{L^2(M,g)}=\int_M\phi\cdot\bar\psi \abs{dz^2}^2 dv_g=4\int_M\phi\cdot\bar\psi \cdot \rho^{-2} 
\frac{i}{2}dz\wedge d\bar z,$$ 
isometric 
to $T_{[(M,c)]}\tau(M)$ equipped with the Weil-Petersson metric. Here and in the following $g$
stands for the unique (modulo M\"obius transformations in the genus zero case) 
complete metric compatible with $c$ that has constant Gauss curvature $1,0,-1$ for surfaces of genus $0,1$ respectively $\gamma\geq 2$ (with unit area in the case $\gamma=1$) and $\rho$ denotes the
conformal factor corresponding to the complex coordinate $z$, determined by $g=\rho^2 dzd\bar z$.

Given any closed Riemann surface of finite genus $\gamma$ equipped with this canonical choice of metric
we now define
$$P_g:\Qu(M,c)\to \Hol(M,c)$$
to be the $L^2(M,g)$-orthogonal projection onto $\Hol(M,c)$.

Furthermore we denote by $\bar\partial \Psi$ the antiholomorphic derivative of a quadratic differential $\Psi$,
that is the 
%$(0,3)$ 
tensor given in complex coordinates by 
$$\bar\partial\Psi=\partial_{\bar z}\psi d\bar z \tensor dz^2.$$

In this paper we prove an estimate for arbitrary quadratic differentials that is reminiscent of the standard Poincar\'e inequality for functions
\beq \label{est:standard} \norm{f-\bar f}_{L^1}\leq C \cdot \norm{\na f}_{L^1}\eeq
bounding the distance of an object from its projection onto a finite dimensional subspace, here the constant functions, in terms of a derivative. %, in the case of \eqref{est:standard} with the constant depending on the domain $\Om$.

However, in stark contrast to the standard Poincar\'e inequality for functions, our inequality for quadratic differentials is 
\textit{uniform}, i.e. independent of the 
geometry of the hyperbolic surface on which we work;
it is valid with a constant $C$ depending only on the topology of the surface (i.e. on the genus) and this feature is essential for applications to the Teichm\"uller harmonic map flow \cite{R-T} to which we allude briefly below.
One further distinction between the normal Poincar\'e estimate and the new estimate is that we use the $\bar \partial$ operator on the right-hand side rather than the full derivative $\nabla$, which makes our estimate also an elliptic estimate, and means that we should make estimates relative to its kernel (i.e. the holomorphic quadratic differentials) rather than the kernel of $\nabla$ (i.e. the constant functions).

%As the main result we prove
\begin{thm}\label{thm:Poincare} (Main theorem.)
Given an arbitrary closed Riemann surface 
$(M,c)$ of genus at least two, there exists a constant $C<\infty$ depending only on the genus of $M$ such that the following uniform Poincar\'e estimate holds true.
The distance of any quadratic differential $\Psi\in\Qu(M,c)$ from its holomorphic part is uniformly bounded in terms of its antiholomorphic derivative in the sense that
\beq\label{est:Poincare} \norm{\Psi-P_g(\Psi)}_{L^1(M,g)}\leq C\cdot \norm{\bar \partial\Psi}_{L^1(M,g)}.\eeq
Here and in the following all norms are computed with respect to the unique hyperbolic  metric $g$ compatible with $(M,c)$.
\end{thm}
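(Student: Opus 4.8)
The plan is to reduce the global estimate to a local one on thick parts of the surface together with a careful analysis on the collars around short geodesics, exploiting that the hyperbolic geometry of a surface of genus $\gamma$ decomposes, via the Margulis lemma and the collar lemma, into a controlled number of thin collars and a thick part of controlled geometry. On the thick part, where the injectivity radius is bounded below by a universal constant, one has a genuine local Poincar\'e/elliptic estimate: writing $\Psi = \psi\,dz^2$, the antiholomorphic derivative $\bar\partial\Psi = \partial_{\bar z}\psi\,d\bar z\tensor dz^2$ controls $\psi$ modulo holomorphic functions on balls of definite size by standard $\bar\partial$-theory (a Cauchy-transform solution of $\partial_{\bar z}u = \partial_{\bar z}\psi$, so that $\psi-u$ is holomorphic there, with $\|u\|_{L^1}\le C\|\partial_{\bar z}\psi\|_{L^1}$ by the $L^1\to L^1$ bound coming from the weak-$L^2$ bound on the kernel $1/(\pi z)$ together with the compact support); patching finitely many such balls then gives control of $\Psi$ on the thick part in terms of $\|\bar\partial\Psi\|_{L^1}$ plus a finite-dimensional error living in the holomorphic quadratic differentials of the thick piece, which in turn is absorbed because $P_g$ is precisely the $L^2$-orthogonal projection onto $\Hol(M,c)$.

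The genuinely hard part will be the collars: on a collar around a geodesic of length $\ell\to 0$ the hyperbolic metric degenerates and one cannot use a naive local estimate with a uniform constant, so one must instead use the explicit conformal model of the collar — an annulus (or, after uniformization, a cylinder $[-L,L]\times S^1$ with $L\sim \pi^2/\ell$ large) with $g = \rho^2|dz|^2$ where $\rho$ has an explicit form in terms of $\ell$. On such a cylinder one expands $\psi$ into Fourier modes in the angular variable; the holomorphic quadratic differentials correspond to a two-dimensional space of model forms (the ones that, in the plumbing coordinate, extend across and are the near-kernel elements of $\bar\partial$ that are $L^2$-large), and the point is that modes which are \emph{not} in (or near) this span are controlled by $\bar\partial\Psi$ with a constant that is uniform in $\ell$ because the relevant one-dimensional ODEs $\partial_{\bar z}\psi = f$ restricted to each Fourier mode have Green's kernels whose $L^1$ operator norm, measured against the hyperbolic volume, does not blow up. The delicate bookkeeping is that the model holomorphic forms on a collar need not extend to global holomorphic quadratic differentials on $M$, so one has to show that the part of $\Psi$ that looks holomorphic on each collar can be corrected by a global holomorphic quadratic differential up to an error controlled by $\|\bar\partial\Psi\|_{L^1}$; this is where one uses that the number of collars is bounded by $3\gamma-3$ and a dimension count (the dimension $3(\gamma-1)$ of $\Hol(M,c)$ together with the structure of the pinching degeneration) to run a finite-dimensional linear-algebra argument with constants depending only on $\gamma$.

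Concretely I would carry out the steps in this order: (1) fix the thick-thin decomposition with universal Margulis constant and record that the number of collars and the number of thick-part coordinate balls needed are bounded in terms of $\gamma$ only; (2) prove the local statement on a ball of fixed hyperbolic size: $\|\Psi\|_{L^1(B)} \le C(\|\bar\partial\Psi\|_{L^1(2B)} + \|\text{holomorphic part on }2B\|_{L^1(2B)})$ via the Cauchy transform; (3) prove the collar statement: on a collar $\Col_\ell$, decomposing into Fourier modes, $\|\Psi - h\|_{L^1(\Col_\ell)} \le C\|\bar\partial\Psi\|_{L^1(\Col_\ell)}$ for some model holomorphic quadratic differential $h$ on the collar, with $C$ independent of $\ell$; (4) glue: combine the thick-part and collar bounds, collect the finitely many ``holomorphic error'' terms into an element $h_0$ which is holomorphic on $M$ minus the core geodesics and show, using the $L^2$ estimates and a compactness/dimension argument uniform in $\gamma$, that $\|\Psi - h_0\|_{L^1(M,g)}\le C\|\bar\partial\Psi\|_{L^1(M,g)}$ with $h_0\in\Hol(M,c)$; (5) replace $h_0$ by $P_g(\Psi)$, which only improves the left-hand side since $P_g(\Psi)$ is by definition the nearest point in $\Hol(M,c)$ in $L^2$, and convert that $L^2$-minimality into the $L^1$ statement using that on the finite-dimensional space $\Hol(M,c)$ all norms are comparable with constants that — crucially — are again uniform by the geometry of the collar degeneration. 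The main obstacle throughout is step (3) together with the uniformity in step (5): ensuring that \emph{no} constant secretly depends on the shortest geodesic length, which forces one to work with the explicit collar model rather than any soft compactness argument on a fixed surface.
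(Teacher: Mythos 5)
Your architecture is a direct one (thick--thin decomposition, local Cauchy-transform estimates, collar analysis, then correction by a global holomorphic differential), whereas the paper argues by contradiction: it normalises $\norm{\Phi_i}_{L^1}=1$, passes to a Deligne--Mumford limit, and shows the $L^1$ norm cannot concentrate on degenerating collars. Your instincts about where the difficulty lives are right --- the collar is the heart of the matter, and the weight $\rho^{-1}$ carried by $\norm{\bar\partial\Psi}_{L^1}$ is exactly what compensates for the fact that the Cauchy transform on a cylinder of length $\sim\pi^2/\ell$ has unweighted $L^1\to L^1$ norm of order $\ell^{-1}$ (the paper implements this by applying the Cauchy formula on rectangles of side $\rho^{-1/2}(s_0)$). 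But two of your steps contain genuine gaps.

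Step (5) is wrong as stated. $P_g(\Psi)$ is the $L^2$-nearest point of $\Hol(M,c)$, not the $L^1$-nearest, so replacing your $h_0$ by $P_g(\Psi)$ does not ``only improve the left-hand side''; what you actually need is $\norm{P_g(\Psi-h_0)}_{L^1}\leq C\norm{\Psi-h_0}_{L^1}$, i.e.\ uniform $L^1$-boundedness of the projection. Moreover, the claim that all norms on the finite-dimensional space $\Hol(M,c)$ are uniformly comparable is false in precisely the regime the theorem is about: the holomorphic quadratic differential $\Om$ concentrated on a collar of core length $\ell$ satisfies $\norm{\Om}_{L^1}\sim\ell^{1/2}\norm{\Om}_{L^2}$ and $\norm{\Om}_{L^\infty}\sim\ell^{-1/2}\norm{\Om}_{L^2}$, so these ratios degenerate as $\ell\to0$; only the product $\norm{\Om}_{L^1}\norm{\Om}_{L^\infty}\sim\norm{\Om}_{L^2}^2$ stays bounded, and proving that is nontrivial. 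Relatedly, the key assertion in step (4) --- that the collar-holomorphic part of $\Psi$, in particular its principal mode $b_0\,dz^2$, can be matched by a global element of $\Hol(M,c)$ up to an error controlled by $\norm{\bar\partial\Psi}_{L^1}$ --- cannot be obtained from a dimension count. The principal mode carries $L^1$ mass $\sim\ell^{-1}\abs{b_0}$ on the collar while being only of size $\abs{b_0}$ near its ends, so it is invisible from the thick part; one needs the quantitative statement that there exists a \emph{global} holomorphic quadratic differential equal to $b_0\,dz^2$ on the collar up to $O(\ell^{1/2})$ in $L^\infty$ and of size $O(\ell^{1/2})$ off the collar. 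This is the paper's Lemma~\ref{lemma:basis}, proved by a separate Deligne--Mumford compactness argument together with the collar-decay estimates of \cite{R-T-Z}; without it the hypothesis $P_g\Psi=0$ gives no control on the principal mode, which is exactly where all the $L^1$ norm can hide. (Minor point: the dangerous near-kernel on each collar is the one-complex-dimensional span of $dz^2$, not a two-dimensional space; the modes $e^{nz}dz^2$, $n\neq0$, decay exponentially into the collar and are controlled from the thick part.)
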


\begin{rmk}
While the left-hand side of \eqref{est:Poincare} is invariant under a conformal change of the metric, the right hand side is not. 
It is important here to take the unique hyperbolic conformal metric.
\end{rmk} 

%\begin{rmk}
For hyperbolic surfaces contained in a compact region of moduli space, 
the estimate \eqref{est:Poincare} was shown in \cite{R-T}, Lemma 2.1.
%\end{rmk}
In that paper, the estimate was used to understand the asymptotics of a new flow which deforms a pair $(u,g)$, where $u$ is a map from a closed surface $M$ to an arbitrary compact Riemannian manifold and $g$ is a constant curvature $-1$ metric on $M$, under the gradient flow of the harmonic map energy. Where possible, the flow tries to converge to a branched minimal immersion, and a key to demonstrating this is to argue that the Hopf differential $\Phi$ has not only 
$\bar\partial \Phi$ and $P_g(\Phi)$ converging to zero in $L^1$, but also that $\Phi$ itself converges to zero in $L^1$. The estimate of
Theorem \ref{thm:Poincare} immediately implies this key fact.

The space of holomorphic quadratic differentials on a surface of genus $0$, i.e. on a sphere, is trivial so that in this case the Poincar\'e estimate 
takes the following form, and can be proved with standard techniques
(cf. \cite[Lemma 2.5]{annals}).

\begin{prop}\label{prop:sphere}
There exists a constant $C<\infty$ %(may for example be chosen as $C=4$) 
such that all quadratic differentials $\Psi\in\Qu(S^2,g)$ on the sphere satisfy
\beq\label{est:sphere}
\norm{\Psi}_{L^1(S^2,g)}\leq C\norm{\bar \partial\Psi}_{L^1(S^2,g)},\eeq
where $g$ is the metric of constant curvature $1$.% on the sphere.
\end{prop}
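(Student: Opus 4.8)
The plan is to use the stereographic identification $S^2=\C\cup\{\infty\}$ with complex coordinate $z$ on $\C$, reduce \eqref{est:sphere} to a weighted $L^1$-estimate on the plane, and then recover $\psi$ from $f:=\partial_{\bar z}\psi$ by an explicit Cauchy-type integral whose kernel is adapted to the topology of $S^2$. In this chart $g=\rho^2\abs{dz}^2$ with $\rho=\tfrac2{1+\abs z^2}$, and $\Psi=\psi\,dz^2$ is a smooth quadratic differential precisely when $\psi\in C^\infty(\C)$ and $w^{-4}\psi(1/w)$ extends smoothly across $w=0$ in the chart $w=1/z$; in particular $\psi(z)=O(\abs z^{-4})$ and $f(z)=O(\abs z^{-6})$ as $z\to\infty$. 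Since $\norm{\Psi}_{L^1(S^2,g)}$ is conformally invariant while $\norm{\bar\partial\Psi}_{L^1(S^2,g)}$ scales with exactly one power of the conformal factor, one has $\norm{\Psi}_{L^1(S^2,g)}=c_1\int_\C\abs\psi\,dA$ and $\norm{\bar\partial\Psi}_{L^1(S^2,g)}=c_2\int_\C\abs f\,(1+\abs z^2)\,dA$ with universal constants $c_1,c_2>0$, so it is enough to prove $\int_\C\abs\psi\,dA\le C\int_\C\abs f\,(1+\abs z^2)\,dA$.

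First I would record the two vanishing moments $\int_\C f\,dA=\int_\C\zeta f\,dA=0$, which follow from Stokes' theorem applied to $\zeta^k\psi\,d\zeta$ over large discs together with the decay of $\psi$. The Cauchy transform $Tf(z)=\tfrac1\pi\int_\C\frac{f(\zeta)}{z-\zeta}\,dA(\zeta)$ satisfies $\partial_{\bar z}Tf=f$ and tends to $0$ at infinity, so $\psi-Tf$ is a bounded entire function vanishing at infinity and hence $\psi=Tf$. The key observation is that the bare kernel $\frac1{z-\zeta}$ decays only like $\abs z^{-1}$ in $z$ and so is not integrable on $\C$: the estimate cannot hold without exploiting the moments. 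Using them one subtracts from $\frac1{z-\zeta}$ the $z^{-1}$- and $z^{-2}$-terms of its expansion at $z=\infty$, but carried by \emph{two} simple poles at the distinct fixed points $z=0$ and $z=1$ rather than one double pole (which would again fail to be integrable), to obtain
\[
\psi(z)=\frac1\pi\int_\C K(z,\zeta)\,f(\zeta)\,dA(\zeta),\qquad
K(z,\zeta)=\frac{\zeta(\zeta-1)}{z\,(z-1)\,(z-\zeta)}=\frac1{z-\zeta}+\frac{\zeta-1}{z}-\frac{\zeta}{z-1}.
\]
Equivalently, $K(\,\cdot\,,\zeta)\,dz^2$ is the meromorphic quadratic differential on $S^2$ with simple poles exactly at $0,1,\zeta,\infty$, normalised to residue $1$ at $\zeta$; its extra poles at $0,1,\infty$ (forced, since $S^2$ carries no meromorphic quadratic differential with a single simple pole) have been placed and weighted so that they contribute nothing when integrated against $f=\partial_{\bar z}\psi$.

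Finally, Tonelli's theorem gives $\int_\C\abs\psi\,dA(z)\le\tfrac1\pi\int_\C\abs{f(\zeta)}\,I(\zeta)\,dA(\zeta)$ with $I(\zeta):=\int_\C\abs{K(z,\zeta)}\,dA(z)$, so everything reduces to the bound $I(\zeta)\le C(1+\abs\zeta^2)$. Writing $I(\zeta)=\abs\zeta\,\abs{\zeta-1}\,J(\zeta)$ with $J(\zeta)=\int_\C\big(\abs z\,\abs{z-1}\,\abs{z-\zeta}\big)^{-1}dA(z)$, the integrand of $J$ is locally integrable near each of $0,1,\zeta$ and is $O(\abs z^{-3})$ at infinity, so $J(\zeta)<\infty$; and splitting $\C$ into the discs $\{\abs{z-p}\le\tfrac12\abs\zeta\}$ for $p\in\{0,1,\zeta\}$ and the complementary region --- where one compares each of the three factors either to $\abs z$ or to $\abs\zeta$ --- yields $J(\zeta)\le C\abs\zeta^{-1}\log(2+\abs\zeta)$ for large $\abs\zeta$, hence $I(\zeta)\le C\abs\zeta\log(2+\abs\zeta)\le C(1+\abs\zeta^2)$; together with the obvious boundedness of $I$ on compact sets this finishes the proof. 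I expect the genuine difficulty to lie entirely in the choice of kernel: the inequality really does fail for the plain Cauchy kernel, the correction forced by the moment conditions must be arranged so that the new kernel is \emph{absolutely} integrable on $\C$ --- whence the need to spread the forced poles over distinct points --- and after that only a routine estimate for a product of three Cauchy factors remains.
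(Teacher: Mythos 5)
Your proposal is correct, and it is a complete, self-contained proof of a statement that the paper itself does not prove: Proposition \ref{prop:sphere} is stated with only a pointer to ``standard techniques (cf.\ [Lemma 2.5, annals])'', so there is no in-paper argument to compare against line by line. I checked the points where your argument could have gone wrong and they all hold up: the decay $\psi=O(|z|^{-4})$, $f=O(|z|^{-6})$ from smoothness in the chart at infinity; the identification of the two norms as $c_1\int|\psi|\,dA$ and $c_2\int|f|(1+|z|^2)\,dA$ (consistent with $|dz^2|=2\rho^{-2}$ and $|d\bar z\otimes dz^2|=2\sqrt2\,\rho^{-3}$ as used elsewhere in the paper); the moments $\int f=\int\zeta f=0$ via Stokes; the partial fraction identity $\frac{\zeta(\zeta-1)}{z(z-1)(z-\zeta)}=\frac1{z-\zeta}+\frac{\zeta-1}{z}-\frac{\zeta}{z-1}$, whose correction terms are affine in $\zeta$ and hence killed by the moments; and the kernel bound, where your three-region splitting does give $J(\zeta)\le C|\zeta|^{-1}\log(2+|\zeta|)$ for large $|\zeta|$ and hence $I(\zeta)\le C(1+|\zeta|)\log(2+|\zeta|)\le C(1+|\zeta|^2)$. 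Your diagnosis of the main point is also the right one: the bare Cauchy kernel is not integrable in $z$, a single compensating double pole is not locally integrable in two dimensions, and splitting the forced correction into two simple poles at distinct points (equivalently, taking the unique meromorphic quadratic differential with simple poles at $0,1,\zeta,\infty$ and residue $1$ at $\zeta$) is exactly what makes the kernel absolutely integrable. The only places deserving an explicit sentence in a written-up version are (i) the uniform boundedness of $I$ on compact sets of $\zeta$, which should be argued from the three-term form of $K$ with a large-disc cutoff rather than from the factorisation $I=|\zeta||\zeta-1|J$ (since $J(\zeta)=\infty$ at $\zeta\in\{0,1\}$, where of course $K\equiv0$), and (ii) the Liouville step identifying $\psi$ with $Tf$; both are routine.
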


\begin{rmk}
The moduli space of the sphere, that is the set of equivalence classes of complex structures that agree up to pull-back by an orientation-preserving diffeomorphism, consists of only one point. 
Since the estimate \eqref{est:sphere} (and also \eqref{est:Poincare}) is invariant under the pull-back by diffeomorphisms, Proposition \ref{prop:sphere} essentially just says that a Poincar\'e estimate is valid for the 
(unique) complex structure on the sphere.
\end{rmk}

Similarly, such an estimate is valid for every fixed complex structure of a torus. Contrary to surfaces of higher genus however, this estimate is \textit{not} uniform.
\begin{prop}
For any flat unit area torus $(T^2,g)$ there exists $C<\infty$ such that 
\beq\label{est:torus}\norm{\Psi-P_g(\Psi)}_{L^1(T^2,g)}\leq C \norm{\bar\partial\Psi}_{L^1(T^2,g)} \text{ for every } \Psi\in \Qu(T^2,g).\eeq
This estimate is \textit{not} uniform; given any number $C<\infty$ there exists a torus $(T^2,g)$ that is flat and has unit area but for which \eqref{est:torus} is violated.
\end{prop}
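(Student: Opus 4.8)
I would treat the two assertions in turn, in both cases first reducing the quadratic-differential estimate to a scalar statement. Represent the flat unit-area torus as $T^2=\C/\Lambda$ with $\Lambda\subset\C$ a lattice of covolume one and $g$ the Euclidean metric; up to isometry every flat unit-area torus arises this way, and the conformal factor $\rho$ is then a constant. In the global coordinate $z$ a quadratic differential is $\Psi=\psi\,dz^2$ with $\psi\in C^\infty(T^2)$, and $\Hol(T^2,c)$ consists exactly of the constants. Since $\rho$ is constant, the $L^2(M,g)$ inner product on $\Qu$ is a constant multiple of the $L^2(\mathrm{Leb})$ inner product on the coefficient functions, so $P_g(\Psi)=\bar\psi_0\,dz^2$ where $\bar\psi_0:=\frac{1}{|T^2|}\int_{T^2}\psi$ is the mean of $\psi$. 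Writing $\psi_0:=\psi-\bar\psi_0$ and evaluating the pointwise tensor norms (all $\rho$-factors being constant), one finds that, up to fixed positive constants depending only on the torus, $\norm{\Psi-P_g(\Psi)}_{L^1(M,g)}$ equals $\int_{T^2}\abs{\psi_0}$ and $\norm{\bar\partial\Psi}_{L^1(M,g)}$ equals $\int_{T^2}\abs{\partial_{\bar z}\psi_0}$ (Lebesgue integrals). Thus \eqref{est:torus} is equivalent to the scalar Poincar\'e-type inequality $\int_{T^2}\abs{\psi_0}\le C\int_{T^2}\abs{\partial_{\bar z}\psi_0}$ for every mean-zero $\psi_0\in C^\infty(T^2)$.

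For the first assertion I would prove this scalar inequality by inverting $\partial_{\bar z}$ explicitly. There is a Green's kernel $G\in L^1(T^2)$, smooth away from $0$ and with the same $\tfrac{1}{\pi z}$-type singularity at the origin as the Euclidean fundamental solution of $\partial_{\bar z}$, satisfying $\partial_{\bar z}G=\delta_0-\tfrac{1}{|T^2|}$ on $T^2$; its existence and the description of its singularity follow from Fourier series on $T^2$ together with the standard planar parametrix for $\partial_{\bar z}$, and integrability near $0$ is automatic since $1/z$ is locally integrable in the plane. Given mean-zero $\psi_0$, the torus convolution $G*\partial_{\bar z}\psi_0$ has the same $\partial_{\bar z}$-derivative as $\psi_0$ (using $\int_{T^2}\partial_{\bar z}\psi_0=0$) and the same, zero, mean, so their difference is a holomorphic function on the compact surface $T^2$, hence constant, hence zero; therefore $\psi_0=G*\partial_{\bar z}\psi_0$, and Young's inequality gives $\int_{T^2}\abs{\psi_0}\le\norm{G}_{L^1(T^2)}\int_{T^2}\abs{\partial_{\bar z}\psi_0}$. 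This proves \eqref{est:torus} for the given torus with a constant proportional to $\norm{G}_{L^1(T^2)}$.

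For the non-uniformity I would test \eqref{est:torus} along the degenerating family of long thin tori $T^2_n:=\C/\bigl(n\Z+\tfrac{i}{n}\Z\bigr)$, which are flat and of unit area, as $n\to\infty$. On $T^2_n$ take $\Psi_n:=\psi_n\,dz^2$ with $\psi_n(x+iy):=e^{2\pi i x/n}$, which is periodic with respect to the relevant lattice, smooth, and of zero mean, so $P_g(\Psi_n)=0$. Since $\abs{\psi_n}\equiv1$, the left-hand side $\norm{\Psi_n-P_g(\Psi_n)}_{L^1}$ is a fixed positive constant times the area, hence bounded below independently of $n$; since $\partial_{\bar z}\psi_n=\tfrac{\pi i}{n}\psi_n$, the right-hand side $\norm{\bar\partial\Psi_n}_{L^1}$ is a fixed constant times $1/n$. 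Hence any $C$ for which \eqref{est:torus} holds on $T^2_n$ must be at least a fixed multiple of $n$, so no single $C<\infty$ works for all flat unit-area tori: given $C$, choosing $n$ large enough makes $\Psi_n$ violate \eqref{est:torus}.

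I expect the only point requiring genuine care to be the reduction of the first paragraph together with the inversion in the second: one must not appeal to $L^1$ elliptic estimates in general, which fail, but the operator $\partial_{\bar z}^{-1}$ on a closed surface is convolution against an honestly integrable kernel, so Young's inequality is legitimate and yields the clean bound $C\asymp\norm{G}_{L^1(T^2)}$. The non-uniformity is then essentially immediate from the explicit family above, and it matches the heuristic that $\norm{G}_{L^1(T^2_n)}\to\infty$ as the torus degenerates.
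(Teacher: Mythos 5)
Your proposal is correct and follows essentially the same route as the paper: the same reduction of the quadratic-differential estimate to a scalar Poincar\'e inequality for lattice-periodic functions (using that holomorphic quadratic differentials on the torus are exactly the constant multiples of $dz^2$ and that all $\rho$-factors are constant), and essentially the same counterexample on long thin unit-area tori (your $e^{2\pi i x/n}$ versus the paper's $\sin(2\pi x/b)$), with identical scaling $\norm{\pbz\psi_n}_{L^1}\sim 1/n$ against an order-one left-hand side. The only difference is that you supply a complete proof of the fixed-torus estimate via the integrable Green's kernel for $\pbz$ and Young's inequality, a step the paper merely asserts as standard; that argument is sound as written.
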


Indeed note that each such torus is isometric to $(\C/\Gamma_{a,b}, g_{eucl})$ for some lattice group $\Gamma_{a,b}=\{n\cdot b+m\cdot (a+\frac{i}{b}),\, n,m\in\Z\}$ with $a\in\R$ and $b>0$, 
and that $\Phi=\phi dz^2$ is a holomorphic
quadratic differential if and only if $\phi$ is a $\Gamma$-periodic holomorphic function, i.e.~a constant. Thus in this special case the Poincar\'e estimate for quadratic differentials
is equivalent to a refined Poincar\'e estimate for $\Gamma$-periodic functions of
$$\norm{\phi-\bar\phi}_{L^1(\C/\Gamma)}\leq C\cdot \norm{\pbz \phi}_{L^1(\C/\Gamma)}.$$
A simple example, say $\phi(x+iy)=\sin(\frac{2\pi}{b}x)$ on $\C/\Gamma_{a=0,b}$ with $b\to \infty$, shows that such an estimate is not uniform.

Theorem \ref{thm:Poincare} will be proved by contradiction. If the result were not true, then we would find a sequence of surfaces and quadratic differentials (which without loss of generality would have no holomorphic part at all)
% (i.e. their projection under $P$ would be null)
which violate \eqref{est:Poincare} for larger and larger values of $C$. The surfaces would have to degenerate by pinching certain necks, because otherwise the result is known from \cite{R-T}. After normalising so that the $L^1$ norm is always $1$, we then pass to a subsequence to get a noncompact limit surface together with a limit quadratic differential which will be holomorphic (see Lemma \ref{lemma:convergence}). But a result in \cite{R-T-Z} (see Lemma \ref{lemma:projection}) tells us that the limit quadratic differential inherits the property of having no holomorphic part at all, and thus must be identically zero.
The key part of this paper is then to show that the limit inherits the property of having $L^1$ norm equal to $1$, giving a contradiction. The essential point is that we must prove that in this limit, $L^1$ norm cannot concentrate on degenerating collars and be lost in the limit, and this is articulated by our key Lemma \ref{lemma:L1}. Essentially, the only way that $L^1$ norm of an almost-holomorphic quadratic differential can concentrate on a long collar is if the quadratic differential has a nonvanishing `principal component' -- i.e. its lowest Fourier mode on the collar is not disappearing, and this component looks like a parallel quadratic differential on the collar. However, by assumption our quadratic differentials are orthogonal to all holomorphic quadratic differentials, and in Lemma \ref{lemma:basis} we construct a sequence of holomorphic quadratic differentials which is purely concentrating on the collar, and is becoming parallel. Thus our original sequence cannot concentrate $L^1$ norm on the collar as desired.

{\em Acknowledgements:} Partially supported by The Leverhulme Trust.

\section{Proof of the main result}
The basic strategy of the proof of Theorem \ref{thm:Poincare} is similar to the one in \cite{R-T} in that we argue by contradiction and use compactness results in order to pass in the limit to a holomorphic quadratic differential 
on some limit surface. A key difference is however that in order to obtain the \textit{uniform} version of the Poincar\'e estimate claimed in Theorem \ref{thm:Poincare} 
we need to be able to deal with degenerating sequences of surfaces, with the local arguments of \cite{R-T} only applicable for considerations away from the degenerating parts of these surfaces.
A crucial part of the proof is thus a discussion, from the point of view of geometric analysis, first of holomorphic quadratic differentials 
on a sequence of degenerating hyperbolic surfaces, and then, more generally, of
non-holomorphic quadratic differentials with controlled antiholomorphic derivatives.

Contrary to the assertion of the theorem, 
let us suppose that there exist a sequence of closed hyperbolic surfaces $(M_i,c_i,g_i)$ of fixed genus, and a sequence of (nonholomorphic) quadratic differentials 
$\Phi_i$ on $(M_i,c_i)$ such that 
$$\frac{\norm{P_{g_i}(\Phi_i)-\Phi_i}_{L^1(M_i,g_i)}}{\norm{\bar\partial\Phi_i}_{L^1(M_i,g_i)}}\to\infty.$$
Replacing $\Phi_i$ by a (multiple of) $P_{g_i}(\Phi_i)-\Phi_i$, 
using the uniformisation theorem and 
pulling back by an appropriate family of diffeomorphisms from $M$ to $M_i$ we obtain the following setting: 

\textit{Assumptions:}
We assume that there exists a closed surface $M$ of genus $\gamma\geq 2$ such that there is a sequence of complex structures $c_i$ on $M$ and a sequence of quadratic differentials 
$\Phi_i\in\Qu(M,c_i)$ for which the following three assumptions are true:
\beq \label{ass}
  %\begin{align}
       P_{g_i}(\Phi_i)=0 \quad\text{ and }\quad%\label{ass:Pg}\\ %\label{ass:L1} 
\norm{\Phi_i}_{L^1(M,g_i)}=1 \quad\text{ and }\quad%\label{ass:dzbar}
\norm{\bar\partial\Phi_i}_{L^1(M,g_i)}\to 0 \text{ as } i\to\infty.
  %\end{align}
%\end{subequations}
\eeq
Here and in the following $g_i$ stands for the unique complete hyperbolic metric compatible with the complex structure $c_i$.

Since we know \cite{R-T} that the Poincar\'e estimate \eqref{est:Poincare} is valid on every compact subset $K$ of moduli space 
(with a constant $C$ depending a priori on $K$) the surfaces $(M,g_i)$ must degenerate in moduli space, i.e. the length of the shortest closed geodesic of $(M,g_i)$ must converge to 
zero as $i\to\infty$.

According to the Deligne-Mumford compactness theorem \cite{Del-Mum}, after passing to a subsequence we may assume that 
$(M,g_i)$ degenerates to a hyperbolic punctured surface $(\Si,h)$ 
(i.e. a surface obtained from finitely many closed Riemann surfaces by removing finitely many points, which is equipped with the complete hyperbolic metric that is compatible with the induced complex structure)
by collapsing $1\leq k\leq 3(\gamma-1)$ 
geodesics. In practice this means that there exist simple closed geodesics $\{\si_i^j\}_{j=1}^k$ on $(M,g_i)$ of length $\ell(\si_i^j)\to 0$ as $i\to \infty$ 
and diffeomorphisms $f_i:\Si\to M\setminus \cup_{j=1}^k \si_i^j$ 
such that the metrics and the corresponding complex structures converge
$$f_i^*g_i\to h,\quad f_i^*c_i\to c_\infty\,\text{ smoothly locally on } \Si.$$
Here the limiting surface $(\Si,c_\infty,h)$ is a non-compact, possibly disconnected, complete hyperbolic surface with $2k$ punctures corresponding to the collapsing geodesics in the sense that $f_i^{-1}$ extends
to a continuous map from $M$ to the compactification of $(\Si,h)$ obtained by filling in $k$ appropriate pairs of punctures with $k$ new points; each geodesic $\si_i^j$ is then mapped by $f_i^{-1}$ to a different one of these (paired) points.

In this situation we then derive a contradiction from the assumptions in \eqref{ass} in three steps; first, and using only local arguments similar to the ones of \cite{R-T},
we obtain that a subsequence of $f_i^*\Phi_i$ converges \textit{locally} to a holomorphic limit $\Phi_\infty$;
second, we find that $\Phi_\infty$ stands orthogonal to the space of integrable holomorphic quadratic differentials on the limit surface, so that the holomorphic quadratic differential 
$\Phi_\infty$ obtained in the first step must be identically zero. 
Finally, we will show that despite the convergence of the $f_i^*\Phi_i$ being only local, the $L^1$ norm is preserved globally in the limit $i\to \infty$ 
and thus that $\norm{\Phi_\infty}_{L^1(\Si,h)}=1$ in contradiction to $\Phi_\infty\equiv 0$.

\begin{lemma}\label{lemma:convergence}
 Let $(M,g_i)$ be a sequence of closed hyperbolic surfaces that degenerates to a hyperbolic punctured surface $(\Si,h)$ as described above. Then for any sequence of 
quadratic differentials $\Psi_i\in\Qu(M,g_i)$ with
$$\norm{\Psi_i}_{L^1(M,g_i)}+\norm{\bar \partial\Psi_i}_{L^1(M,g_i)}\leq C<\infty$$
there exists a subsequence converging 
$$f_i^*\Psi_i\to \Psi_\infty \text{ in } L_{loc}^1(\Si,h)$$
to a quadratic differential $\Psi_\infty\in L^1(\Sigma,h)$.
Additionally if $\norm{\bar \partial \Psi_i}_{L^1(\Si,h)}\to 0$ then $\Psi_\infty$ is holomorphic.
\end{lemma}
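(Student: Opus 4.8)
\emph{Proof strategy.}
The conclusion will be obtained by establishing $L^1$-convergence chart by chart away from the punctures, using the ellipticity of $\bar\partial$, and then assembling the local limits by a diagonal argument over an exhaustion of $\Si$. To set up the local picture, I would fix a compact set $K\Subset\Si$ and cover it by finitely many open sets $V$ on each of which $c_\infty$ carries a holomorphic coordinate. The smooth convergence $f_i^*c_i\to c_\infty$ provides, for $i$ large, a $c_i$-holomorphic coordinate $z_i$ on $V$ converging smoothly to the $c_\infty$-holomorphic one; it is important to use these $c_i$-holomorphic coordinates, because with respect to $z_i$ the operator $\bar\partial$ acts on the component function of $f_i^*\Psi_i$ simply as $\pbz$, whereas in a fixed $c_\infty$-coordinate the $\bar\partial$-derivative of the leading component would also pick up the uncontrolled holomorphic derivative of that component. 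Writing $f_i^*\Psi_i=u_i\,dz_i^2$, conformal invariance of the $L^1$-norm of a quadratic differential, together with the fact that the conformal factors of $f_i^*g_i$ stay bounded above and below on $V$ (since $f_i^*g_i\to h$), converts the hypothesis into a uniform bound $\norm{u_i}_{L^1}+\norm{\pbz u_i}_{L^1}\le C'$ on coordinate domains $\Omega\subset\C$ exhausting $z_\infty(V)$.

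The analytic core is the following claim about functions on a fixed domain $\Omega\subset\C$: if $\norm{u_i}_{L^1(\Omega)}+\norm{\pbz u_i}_{L^1(\Omega)}$ is bounded, then a subsequence converges in $L^1_{\mathrm{loc}}(\Omega)$ to some $u_\infty\in L^1_{\mathrm{loc}}(\Omega)$, which is holomorphic if moreover $\pbz u_i\to0$ in $L^1$. To prove it, fix a cutoff $\chi$ with $\supp\chi\subset\Omega$ and $\chi\equiv1$ on $\Omega'\Subset\Omega$, and on $\Omega'$ split $u_i=v_i+w_i$, where $v_i$ is the Cauchy transform of $\chi\,\pbz u_i$, i.e.\ its convolution with the locally integrable kernel $-1/(\pi z)$, so that $\pbz v_i=\pbz u_i$ on $\Omega'$ and $w_i:=u_i-v_i$ is holomorphic there. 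Local integrability of the Cauchy kernel makes the Cauchy transform bounded and $L^1$-equicontinuous from $L^1$ into $L^1(\Omega')$, so $\{v_i\}$ is precompact in $L^1(\Omega')$ by the Fr\'echet--Kolmogorov criterion; and $\{w_i\}$, being bounded in $L^1(\Omega')$ and holomorphic, is precompact in $C^\infty_{\mathrm{loc}}(\Omega')$ by the interior estimates for holomorphic functions and Montel's theorem. Hence, along a subsequence, $u_i\to u_\infty$ in $L^1_{\mathrm{loc}}(\Omega')$. If in addition $\pbz u_i\to0$ in $L^1$, then $\norm{v_i}_{L^1(\Omega')}\le C\norm{\pbz u_i}_{L^1(\Omega)}\to0$, so $u_\infty$ is a locally uniform limit of the holomorphic functions $w_i$ and is therefore holomorphic (Weyl's lemma also applies directly to identify $\pbz u_\infty=0$ as the distributional limit of $\pbz u_i$).

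Transferring back via a standard change of variables, the smooth convergence $z_i\to z_\infty$ and the $L^1_{\mathrm{loc}}$-convergence of the $u_i$ give $f_i^*\Psi_i=u_i\,dz_i^2\to u_\infty\,dz_\infty^2$ in $L^1_{\mathrm{loc}}(V,h)$, and the limit is a quadratic differential with respect to $c_\infty$ (holomorphic in the special case). These local limits agree on overlaps, so they patch to a quadratic differential $\Psi_\infty$ on $\Si$, and a diagonal argument over an exhaustion $K_1\Subset K_2\Subset\cdots$ of $\Si$ by compact sets produces one subsequence with $f_i^*\Psi_i\to\Psi_\infty$ in $L^1_{\mathrm{loc}}(\Si,h)$. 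Finally $\Psi_\infty\in L^1(\Si,h)$: Fatou's lemma on each $K_n$, combined with $\norm{f_i^*\Psi_i}_{L^1(K_n,h)}\le(1+o(1))\norm{\Psi_i}_{L^1(M,g_i)}\le C+o(1)$ — which holds because $f_i^*g_i\to h$ and $f_i$ maps $\Si$ diffeomorphically onto $M\setminus\bigcup_j\si_i^j$ — yields $\norm{\Psi_\infty}_{L^1(\Si,h)}\le C$. I expect the genuine obstacle to be the $L^1$-compactness of the middle paragraph: since $L^1$ is not reflexive, a bound on the $u_i$ alone is useless, and it is precisely the (local) $\bar\partial$-regularity — concretely, the local integrability of the Cauchy kernel — that restores compactness; a secondary subtlety is the need to work in $c_i$-holomorphic rather than $c_\infty$-holomorphic coordinates, as noted above.
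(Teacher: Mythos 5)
Your proposal is correct and follows essentially the same route as the paper: cover compact subsets of $\Si$ by charts adapted to the converging metrics/complex structures, reduce to a local $L^1$-compactness statement for functions whose $L^1$ norm and whose $\pbz$-derivative's $L^1$ norm are bounded, then patch and diagonalise. The only difference is that the paper outsources that local compactness step to Lemma 2.3 of \cite{R-T}, whereas you prove it directly via the Cauchy-transform splitting and the Fr\'echet--Kolmogorov criterion; your argument for that step is sound.
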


Given a sequence $\Phi_i$ as in \eqref{ass} we thus find that after passing to a subsequence and pulling-back by diffeomorphisms it converges to a limit $\Phi_\infty$ which is 
an element of the space 
$$\Hol(\Si,h):=\{ \Psi \text{ a holomorphic quadratic differential on }(\Si,h) \text{ with } \norm{\Psi}_{L^1(\Si,h)}<\infty\}$$
which can be equivalently characterised as the space of holomorphic quadratic differentials with at most a simple pole at each puncture. 
In the limit $i\to\infty$ the dimension of $\Hol$ reduces by the number $k$ of collapsing geodesics, i.e. $\dim_\C(\Hol(\Si,h))=3(\gamma-1)-k=\dim_\C(\Hol(M,g_i))-k$, by Riemann-Roch.

As is discussed in \cite{R-T-Z}, Lemma A.11, the definition of $\Hol(\Si,h)$ would be unchanged if we required $\norm{\Psi}_{L^\infty(\Si,h)}<\infty$
instead of $\norm{\Psi}_{L^1(\Si,h)}<\infty$. (Note that the volume of $(\Si,h)$ is finite, so the $L^\infty$ norm controls the $L^1$ norm. On the other hand, controlling the $L^1$ norm gives sufficient control on the order of any poles for the $L^\infty$ norm also to be controlled.)
In particular, all elements in $\Hol(\Si,h)$ lie in $L^2$, and 
the space of $L^2$ quadratic differentials then has a natural projection onto $\Hol(\Si,h)$. Moreover, because every element of $\Hol(\Si,h)$ lies in $L^\infty$, this projection extends naturally to the space of $L^1$ quadratic differentials.

It thus makes sense to analyse the projection onto $\Hol(\Si,h)$ of any quadratic differential $\Psi_\infty$ obtained as a local $L^1(\Si,h)$ limit of a sequence of quadratic differentials with uniformly bounded
$L^1$ norm, and in particular to assert that the limit 
$\Phi_\infty$ of the sequence $\Phi_i$ satisfying \eqref{ass} is 
orthogonal to $\Hol(\Si,h)$. 
(Together with our knowledge that $\Phi_\infty$ is holomorphic, this will imply that $\Phi_\infty\equiv 0$.)
In order to prove this, we make use of the following continuity result for the projections onto the spaces $\Hol(\cdot)$,
derived in \cite{R-T-Z}. A precise definition of the spaces $W_i$ will be given later once we have some more notation.

\begin{lemma}\label{lemma:projection}
Let $(M,g_i)$ be any sequence of closed hyperbolic surfaces that degenerates to a hyperbolic punctured surface $(\Si,h)$ by collapsing $k$ collars. Then there exist subspaces $W_i\subset \Hol(M,g_i)$
of dimension $3(\gamma-1)-k$ such that the $L^2(M,g_i)$-orthogonal projections $P_{g_i}^{W_i}$ onto $W_i$ converge to the $L^2(\Si,h)$-orthogonal projection $P_h^{\Hol(\Si,h)}$ onto the space of integrable holomorphic 
quadratic differentials $\Hol(\Si,h)$ in the following sense:

For any sequence $\Psi_i\in\Qu(M,g_i)$ of quadratic differentials on $(M,g_i)$ with $\norm{\Psi_i}_{L^1(M,g_i)}$ bounded and with $f_i^*\Psi_i\to \Psi_\infty$
locally in $L^1(\Si,h)$, we have
$$f_i^*\big(P_{g_i}^{W_i}(\Psi_i)\big)\to P_h^{\Hol(\Si,h)}(\Psi_{\infty}) \text{ smoothly locally on }\Si.$$ 
\end{lemma}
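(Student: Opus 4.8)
The plan is to choose $W_i$ by an explicit, canonical recipe coming from the geometry of the degenerating collars, and then to reduce the convergence of the projections to a computation of the coefficients of $P_{g_i}^{W_i}(\Psi_i)$ in a suitable orthonormal basis of $W_i$, which in turn rests on two facts about holomorphic quadratic differentials on a thin hyperbolic collar. By the collar lemma, for $i$ large each collapsing geodesic $\si_i^j$ (of length $\ell_i^j\to0$) is the core of an embedded hyperbolic collar $\Col_i^j$ carrying a canonical isometric $S^1$-action, and the thin part $\{\inj_{g_i}<\delta\}$ of $(M,g_i)$ is, for $\delta$ below the Margulis constant and $i$ large, the disjoint union of the subcollars $\Col_i^j(\delta):=\{p\in\Col_i^j:\inj_{g_i}(p)<\delta\}$. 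Given $\Phi\in\Hol(M,g_i)$, let $b_0^j(\Phi)\in\C$ be the coefficient of the zeroth $S^1$-Fourier mode of $\Phi|_{\Col_i^j}$ (a multiple of the parallel holomorphic quadratic differential on the collar); this is a $\C$-linear functional on $\Hol(M,g_i)$, the \emph{principal part} of $\Phi$ on $\Col_i^j$. I would then define
\[ W_i:=\bigl\{\Phi\in\Hol(M,g_i):\ b_0^j(\Phi)=0\ \text{for } j=1,\dots,k\bigr\},\qquad \dim_\C W_i\geq 3(\gamma-1)-k . \]

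The two facts about collars are: (i) (collar estimates in the spirit of \cite{R-T}) for any $\Phi\in\Hol(M,g_i)$ one has $\abs{\Phi}_{g_i}\leq C(\delta)\norm{\Phi}_{L^1(M,g_i)}$ on $\{\inj_{g_i}\geq\delta\}$, while on a collar the non-principal Fourier modes decay exponentially towards the core, so that if $\Phi\in W_i$ then $\norm{\Phi}_{L^\infty(M,g_i)}\leq C\norm{\Phi}_{L^1(M,g_i)}$ with $C$ universal and, moreover, $\norm{\Phi}_{L^\infty(\{\inj_{g_i}<\delta\})}\leq C e^{-c/\delta}\norm{\Phi}_{L^1(M,g_i)}$; consequently no $L^1$- or $L^2$-mass of an element of $W_i$ can concentrate on the thin part. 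And (ii) the Deligne--Mumford convergence $f_i^*g_i\to h$, $f_i^*c_i\to c_\infty$ smoothly locally, combined with (i) and Cauchy estimates, shows that any $L^2(M,g_i)$-orthonormal basis $\Phi_i^{(1)},\dots,\Phi_i^{(N_i)}$ of $W_i$ has, along a subsequence, $f_i^*\Phi_i^{(m)}\to\Phi_\infty^{(m)}$ smoothly locally on $\Si$ with each $\Phi_\infty^{(m)}$ holomorphic; writing $\Phi_i^{(m)}$ in a plumbing coordinate near a puncture and using $\ell_i^j\to0$, the vanishing of $b_0^j$ forces $\Phi_\infty^{(m)}$ to have at most a simple pole at each puncture, i.e. $\Phi_\infty^{(m)}\in\Hol(\Si,h)$. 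Since no $L^2$-mass escapes onto the collars, $\langle\Phi_\infty^{(m)},\Phi_\infty^{(n)}\rangle_{L^2(\Si,h)}=\lim_i\langle\Phi_i^{(m)},\Phi_i^{(n)}\rangle_{L^2(M,g_i)}$, so $\Phi_\infty^{(1)},\dots,\Phi_\infty^{(N_i)}$ are mutually orthonormal in $\Hol(\Si,h)$; as $\dim_\C\Hol(\Si,h)=3(\gamma-1)-k$ this forces $N_i\leq 3(\gamma-1)-k$, hence $\dim_\C W_i=N_i=3(\gamma-1)-k$ and $\{\Phi_\infty^{(m)}\}$ is an $L^2(\Si,h)$-orthonormal basis of $\Hol(\Si,h)$.

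Next, given $\Psi_i$ as in the statement (so $\norm{\Psi_i}_{L^1(M,g_i)}\leq C$ and $f_i^*\Psi_i\to\Psi_\infty$ locally in $L^1(\Si,h)$, whence $\Psi_\infty\in L^1(\Si,h)$ by Fatou), I would write $P_{g_i}^{W_i}(\Psi_i)=\sum_m\langle\Psi_i,\Phi_i^{(m)}\rangle_{L^2(M,g_i)}\Phi_i^{(m)}$ and split each coefficient into the integral over $\{\inj_{g_i}\geq\delta\}$ and the integral over the thin part. On the thin part the latter is bounded by $\norm{\Phi_i^{(m)}}_{L^\infty(\{\inj_{g_i}<\delta\})}\norm{\Psi_i}_{L^1(M,g_i)}\leq C e^{-c/\delta}$, uniformly in $i$ and $m$. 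The region $\{\inj_{g_i}\geq\delta\}$ corresponds under $f_i^{-1}$ to a region $\Si_\delta\subset\Si$ exhausting $\Si$ as $\delta\to0$, and there one passes to the limit using $f_i^*\Psi_i\to\Psi_\infty$ in $L^1(\Si_\delta)$, $f_i^*\Phi_i^{(m)}\to\Phi_\infty^{(m)}$ uniformly on $\Si_\delta$, and $f_i^*g_i\to h$. Letting $i\to\infty$ and then $\delta\to0$ (using $\Psi_\infty\in L^1$, $\Phi_\infty^{(m)}\in L^\infty$) gives $\langle\Psi_i,\Phi_i^{(m)}\rangle_{L^2(M,g_i)}\to\langle\Psi_\infty,\Phi_\infty^{(m)}\rangle_{L^2(\Si,h)}$, and hence $f_i^*\bigl(P_{g_i}^{W_i}(\Psi_i)\bigr)=\sum_m\langle\Psi_i,\Phi_i^{(m)}\rangle f_i^*\Phi_i^{(m)}$ converges smoothly locally on $\Si$ to $\sum_m\langle\Psi_\infty,\Phi_\infty^{(m)}\rangle\Phi_\infty^{(m)}=P_h^{\Hol(\Si,h)}(\Psi_\infty)$. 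Since this holds along every subsequence with the same limit (the operator $P_h^{\Hol(\Si,h)}$ is independent of the chosen orthonormal basis), the full sequence converges; note the $W_i$ are defined canonically along the Deligne--Mumford subsequence already fixed at the outset, so there is no circularity.

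The analytic heart of the argument, and the step I expect to be by far the most delicate, is fact (i): showing that the vanishing of the $k$ principal parts is exactly the right codimension-$k$ condition and that it upgrades to genuine exponential smallness of $\abs{\Phi}_{g_i}$ on the thin parts, with constants independent of $i$ and of the shrinking collar lengths. This needs a careful Fourier/Laurent analysis on the degenerating collars together with the matching of the two ends of each collar to a pair of punctures of $\Si$ in plumbing coordinates -- precisely the content to be extracted from \cite{R-T-Z}. The remaining steps -- the compactness in (ii), the dimension count, and the coefficient limit -- are then soft.
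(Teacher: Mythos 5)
Your proposal is correct and follows essentially the same route as the paper, which does not prove this lemma itself but imports it from \cite{R-T-Z}: your definition of $W_i$ as the kernel of the $k$ principal-part functionals is exactly the paper's \eqref{def:Wi}, and your ``fact (i)'' is precisely the combination of the uniform $L^\infty$--$L^2$ bound \eqref{Wi-Linfty} and the collar decay estimate \eqref{est-collar-decay} that the paper quotes from that reference. The remaining steps (orthonormal-basis compactness, the dimension count against $\dim_\C\Hol(\Si,h)=3(\gamma-1)-k$, and the thick/thin splitting of the coefficients) are sound and match the cited argument.
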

We remark that a stronger statement holds true for the projections $P_{g_i}^{W_i}$, see Theorem 2.6 of \cite{R-T-Z},
asserting not only local convergence 
but also convergence of the \textit{global} $L^p$ norms, $1\leq p\leq \infty$, to the corresponding norm of the limit  
and thus excluding any concentration of elements of $W_i$ on the degenerating parts of the surface. 
Conversely, we will later see that elements of $W_i^\perp$ concentrate solely on \textit{degenerating collar regions}.

Returning to our sequence of quadratic differentials $\Phi_i$ note that \eqref{ass} implies in particular that $P_{g_i}^{W_i}(\Phi_i)=0$, so by Lemma \ref{lemma:projection} the limit
$\Phi_\infty\in\Hol(\Si,h)$ obtained above must satisfy 
$P_h^{\Hol(\Si,h)}(\Phi_\infty)=0$ and thus vanish identically $\Phi_\infty\equiv 0$.

Conversely, we will prove that despite the convergence of $f_i^*\Phi_i$ on $\Si$ being only local, 
the $L^1$ norms are preserved globally and thus $\norm{\Phi_\infty}_{L^1(\Si,h)}=1$. In this argument, the key point to be proven is that there can be no concentration of $L^1$ norm on 
the degenerating parts of the surface. 
This follows from the following more general result, controlling almost-holomorphic quadratic differentials in the $\de$-thin part of the surface (i.e. where the injectivity radius is less than $\de$) which is central to this paper.

\begin{lemma}\label{lemma:L1}
Let $(M,g)$ be any closed hyperbolic surface. Then there exists a constant $C<\infty$  depending only on the genus of the surface $M$ such that for every quadratic differential $\Psi\in\Qu(M,g)$ with 
$$P_g\Psi=0,$$
and every $\de>0$, we have the estimate 
$$\norm{\Psi}_{L^1(\de\text{-thin}(M,g))}\leq C\cdot \big(\norm{\bar \partial \Psi}_{L^1(M,g)}+\de^{1/2}\norm{\Psi}_{L^1(M,g)}\big).$$
\end{lemma}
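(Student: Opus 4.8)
The plan is to reduce the global statement to a local analysis on each component of the thin part, which (up to controlling boundedly many short geodesics) is a disjoint union of hyperbolic collars, and on each collar work in the standard conformal coordinate in which the collar is a long flat cylinder. On a collar of a short geodesic of length $\ell$, we use Fermi-type coordinates $(s,\theta)$, $s\in(-L,L)$, $\theta\in S^1$, with $L\sim\pi^2/\ell$ large, in which the hyperbolic metric is $g=\rho^2(ds^2+d\theta^2)$ with the explicit collar conformal factor $\rho(s)$. Writing $\Psi=\psi\,dz^2$ with $z=s+i\theta$, expand $\psi$ into Fourier modes in $\theta$: $\psi(s,\theta)=\sum_{n\in\Z}\psi_n(s)e^{in\theta}$. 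The antiholomorphic derivative controls $\partial_{\bar z}\psi$, and since $dv_g=\rho^2\,ds\,d\theta$ while $|dz^2|_g^2=\rho^{-4}$, the quantities $\|\Psi\|_{L^1}$ and $\|\bar\partial\Psi\|_{L^1}$ become, on the collar, $\int\rho^{-2}\rho^2|\psi| = \int|\psi|\,ds\,d\theta$ and $\int|\partial_{\bar z}\psi|\,ds\,d\theta$ — i.e. the weights essentially cancel, which is exactly why the hyperbolic metric must be used. The first step is therefore to bound $\int_{\text{collar}}|\psi|$ by the $\bar\partial$-norm plus a term coming from the zeroth Fourier mode $\psi_0$.

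Second, I would treat the two pieces separately. For the nonzero modes $n\neq0$: the equation $\partial_{\bar z}\psi = 2\rho^{-1}\bar\partial\Psi$-type quantity, combined with integration in $s$ and a Poincaré/Wirtinger inequality in $\theta$ on each circle (which gains a factor from the $|n|\geq 1$ spectral gap of $-\partial_\theta^2$ on $S^1$), lets one absorb $\sum_{n\neq0}\|\psi_n\|$ into $C\|\bar\partial\Psi\|_{L^1}$; here one must be a little careful near $s=\pm L$ where the collar meets the thick part, but boundary terms are controlled by the local estimate of \cite{R-T} on the (finitely many, uniformly non-degenerate) components of the thick part, or simply by enlarging the collar slightly. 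For the zeroth mode $\psi_0(s)$: from $\bar\partial$-control one gets that $\psi_0(s)$ is close to a holomorphic function of $z=s+i\theta$ that is $\theta$-independent, i.e. close to a constant $a\in\C$; more precisely $|\psi_0(s)-a|$ is controlled by $\int|\bar\partial\Psi|$. So the only way $L^1$ mass can live on the collar is through the ``principal part'' $a\,dz^2$, which is a \emph{parallel} holomorphic quadratic differential on the cylinder, and $\|a\,dz^2\|_{L^1(\text{collar})}\sim |a|\cdot\Vol$ can be as large as $|a|/\ell$.

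The crux is therefore the third step: showing $|a|\le C\delta^{1/2}\|\Psi\|_{L^1(M,g)}$ (for a collar in the $\delta$-thin part), using the hypothesis $P_g\Psi=0$. This is where Lemma \ref{lemma:basis} enters: for each collapsing (or short) geodesic there is a holomorphic quadratic differential $\Theta_i\in\Hol(M,c)$ which is essentially concentrated on that collar and there looks, to leading order, like $\Theta\approx b\,dz^2$ with $b$ normalised so that $\|\Theta\|_{L^2}$ is of unit order while $\|\Theta\|_{L^1}$ is also controlled — crucially with explicit dependence on $\ell$. The orthogonality $\langle\Psi,\Theta\rangle_{L^2(M,g)}=0$ then reads, after splitting into the collar part (where $\Psi\approx a\,dz^2$ plus lower-order modes, and those lower-order modes are $L^2$-orthogonal to the $\theta$-independent $\Theta$) and the rest (where both objects are small in the appropriate sense), as an estimate forcing $|a|$ to be small; tracking the powers of $\ell=$ (length of geodesic) $\lesssim \delta$ against $\Vol(\text{collar})\sim 1/\ell$ produces exactly the gain $\delta^{1/2}$. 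Summing over the boundedly many short geodesics — there are at most $3(\gamma-1)$ of them, which is how $C$ ends up depending only on the genus — and adding the collar contributions to the (harmless, by \cite{R-T}) contribution from any remaining thin region completes the proof. The main obstacle is this last balancing act: getting the explicit $\ell$-dependence in the norms of the model holomorphic differentials $\Theta_i$ right, and checking that the cross terms between the principal part $a\,dz^2$ and the higher Fourier modes of $\Psi$, as well as the tails of $\Theta_i$ outside its collar, are genuinely negligible at the scale $\delta^{1/2}$.
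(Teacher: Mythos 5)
Your overall architecture largely matches the paper's: localise to the collars of the at most $3(\gamma-1)$ short geodesics, isolate the $\theta$-mean $\alpha(s)=\psi_0(s)$, show it is almost constant using $\partial_{\bar z}\psi$, and kill the constant via orthogonality to the concentrated, almost-parallel holomorphic differential of Lemma \ref{lemma:basis}; balancing powers of $\ell$ against the collar volume $\sim\ell^{-1}$ and using $\ell\lesssim\de$ is indeed how the $\de^{1/2}$ arises, exactly as in the paper's Lemma \ref{lemma:alpha}. The genuine gap is your second step, the treatment of the nonzero Fourier modes.

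A Wirtinger/spectral-gap inequality on circles cannot absorb $\sum_{n\neq0}\psi_n$ into $C\norm{\bar\partial\Psi}_{L^1}$, because $\partial_\theta\psi$ is not controlled by $\partial_{\bar z}\psi$: writing $2\partial_{\bar z}\psi=\partial_s\psi+i\partial_\theta\psi$, the $n$-th mode satisfies $\psi_n'-n\psi_n=f_n$, whose homogeneous solution $c_ne^{ns}$ is invisible to $\bar\partial\Psi$ yet has arbitrarily large $\theta$-derivative. Concretely, for holomorphic $\psi=e^{nz}$ one has $\bar\partial\Psi\equiv 0$ while $\int_{\Col}\abs{\psi_n}\,ds\,d\theta\sim e^{nX}/n$ is enormous, so no estimate of the higher modes by $\norm{\bar\partial\Psi}_{L^1}$ alone (with or without boundary corrections) can hold on the whole collar. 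These modes are small on the $\de$-thin part only because they decay exponentially from the collar ends, where their size is in turn bounded by the \emph{global} $L^1$ norm of $\Psi$; this is precisely what produces the unavoidable $\de^{1/2}\norm{\Psi}_{L^1(M,g)}$ term. The paper implements this not mode-by-mode but via the inhomogeneous Cauchy formula on rectangles of side $\sim\rho^{-1/2}(s_0)$ about each point of the thin part, averaged over a family of translates so that the horizontal boundary integrals also reduce to $\alpha$; the factors $\rho^{1/2}(s_0)\leq C\de^{1/2}$ in the boundary error terms are the source of the $\de^{1/2}$. Two smaller inaccuracies: the conformal weights do \emph{not} cancel for the $\bar\partial$ term, since $\abs{d\bar z\otimes dz^2}=2\sqrt2\,\rho^{-3}$ gives $\norm{\bar\partial\Psi}_{L^1(\Col)}=2\sqrt2\int\rho^{-1}\abs{\partial_{\bar z}\psi}$ — the extra weight $\rho^{-1}$, comparable to the reciprocal of the distance to the collar end, works in your favour and is actually needed when integrating the mean-value estimate over the long collar; and the target bound $\abs{a}\leq C\de^{1/2}\norm{\Psi}_{L^1}$ is too weak, since $\norm{a\,dz^2}_{L^1(\Col)}\sim\abs{a}/\ell$ forces you to prove $\abs{a}\lesssim \ell^{2}\norm{\Psi}_{L^1}+\ell\norm{\bar\partial\Psi}_{L^1}$, which is what the orthogonality computation in Lemma \ref{lemma:alpha} actually delivers.
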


Returning to our sequence of quadratic differentials $\Phi_i$ satisfying the assumptions in \eqref{ass} we thus find that the $L^1$ norms on the $\de$-thick parts 
$\Si_i^\de:=\{p\in\Si: \inj_{f_i^*g_i}(p)\geq \de\}$ of the degenerating surfaces $(\Si,f_i^*g_i)$ satisfy
\beq\label{est:sup-L1}
\sup_{\de>0}\lim_{i\to \infty}\norm{f_i^*\Phi_i}_{L^1(\Si_i^\de,f_i^*g_i)}=1-\inf_{\de>0}\lim_{i\to\infty}\norm{\Phi_i}_{L^1(\de\text{-thin}(M,g_i))}=1.\eeq

We remark that the special structure of hyperbolic surfaces, in particular the collar lemma of Keen-Randol \cite{randol}, %and the characterisation of \cite{Hu} of the $\arsinh(1)$-thin set as a union such collar around closed geodesics,
leads to the observation that for any $0<\de<\arsinh(1)$ the $\de$-thick part 
$\Si_i^\de$ of $(\Si,f_i^*g_i)$ converges to the 
$\de$-thick part $\Si^\de$ of the limiting surface $(\Si,h)$, which is a compact subset of $(\Si,h)$, 
in the sense that both $\Si_i^\de$ and $\Si^\de$ lie in a fixed ($i$-independent) compact set and the measure of their symmetric difference converges to zero (see Lemma A.7 
in \cite{R-T-Z}). 
Combined with the locally uniform convergence of the metrics $f_i^*g_i\to h$ we thus conclude that for 
any $\de>0$ 
\beq \label{eq:L1-thick}
\norm{\Phi_\infty}_{L^1(\Si^\de,h)}=\lim_{i\to\infty}\norm{f_i^*\Phi_i}_{L^1(\Si_i^\de,f_i^*g_i)}
\eeq
so taking the supremum over $\de>0$ and using \eqref{est:sup-L1} we must have $\norm{\Phi_\infty}_{L^1(\Si,h)}=1$ 
in contradiction to the fact that $\Phi_\infty\equiv 0$ which was a consequence of Lemmas \ref{lemma:convergence} and \ref{lemma:projection}.

\begin{proof}[Proof of Lemma \ref{lemma:convergence}]
The main tool for the proof of this lemma is the compactness lemma 2.3 of \cite{R-T} for functions on the euclidean disc $D_1$ for which the $L^1$ norm of both the function and its antiholomorphic derivative is bounded.

Let $\Psi_i$ and $(M,g_i)$ be as in Lemma \ref{lemma:convergence} and recall that the convergence of the metrics $f_i^*g_i$ also implies convergence of the associated complex 
structures $f_i^*c_i$.
In practice this means that given any compact subset $K\subset \Si$ there exists a number $\de>0$ and 
a sequence of atlases covering $K$ which consist of coordinate charts that can be viewed as isometries
%a converging sequence of what we call \emph{isometric coordinate charts}
$$\phi_i^j:B_{f_i^*g_i}(p^j,\de)\to(B_{g_H}(0,\de),g_H),$$
from the balls $B_{f_i^*g_i}(p^j,\de)$ of radius $\de$ in $(\Si,f_i^*g_i)$ to the fixed ball $B_{g_H}(0,\de)$ of radius $\de$ in the Poincar\'e hyperbolic disc,
and the maps $\phi_i^j$ converge smoothly to an isometry $\phi_\infty^j$
from $B_{h}(p^j,\de)\subset (\Si,h)$ to 
$(B_{g_H}(0,\de),g_H)$. Here we can assume that for each $i$, the set $K$ is covered not only by $B_{f_i^*g_i}(p^j,\de)$ but also by the balls $B_{f_i^*g_i}(p^j,\de/2)$ with half the radius. 

The assumptions of Lemma \ref{lemma:convergence} then imply uniform $L^1(B_{g_H}(0,\de))$ bounds on both the functions $\psi_i^j$ representing $f_i^*\Psi_i$ in these coordinate charts, 
and their antiholomorphic derivatives. Thus applying Lemma 2.3 of \cite{R-T} and passing to a subsequence we find that the functions $\psi_i^j$ converge in 
$L^1$ on a slightly smaller disc, say on $B_{g_H}(0,\de/2)$, to a limit $\psi_\infty^j$ and that this limit is holomorphic if 
$\norm{\pbz\psi_i^j}_{L^1(B_{g_H}(0,\de))}\to 0$ and thus in particular if
$\norm{\bar \partial \Psi_i}_{L^1(M,g_i)}\to 0$.

Pulling back by the charts $\phi_i^j$ as well as making use of the convergence of the metrics we then obtain that the quadratic differentials $f_i^*\Psi_i$ converge 
to a limiting quadratic differential $\Psi_\infty$ in the sense of $L^1(K,h)$ convergence of tensors and that the limit is holomorphic provided the antiholomorphic derivatives 
converge to zero as described in the lemma. 
\end{proof}

The main step of the proof of Theorem \ref{thm:Poincare} thus consists in proving Lemma \ref{lemma:L1}.

\textit{Proof of Lemma \ref{lemma:L1}.}
Let $(M,g)$ be a closed hyperbolic surface of genus $\gamma$. We recall that the Keen-Randol collar lemma \cite{randol} gives the 
following explicit description of $(M,g)$ near each simple closed geodesic $\si$ of length 
$\ell>0$: 
there is a neighbourhood $\Col$ of $\si$ in $(M,g)$ which is isometric to $\Col(\ell)$, where
$\Col(\ell)$ is the cylinder $(-X(\ell), X(\ell)) \times S^1$ equipped with the metric $\rho^2(s)(ds^2+d\th^2)$, with
$$X(\ell)= \frac{2\pi}{\ell}\left(\frac{\pi}{2}-\arctan\left(\sinh\left(\frac{\ell}{2}\right)\right) \right),  \quad\text{ and }\quad \rho(s)=\frac{\ell}{2\pi \cos(\frac{\ell s}{2\pi})}.$$
We will also use on several occasions that for $z=s+i\th$, 
\begin{equation}
\label{sizes_on_collars}
|dz^2|=2\rho^{-2}, \qquad\text{ and }\qquad \norm{dz^2}_{L^2(\Col(\ell))}^2 = 8\pi\int_{-X}^X\rho^{-2}(s)ds\sim \ell^{-3},
\end{equation}
as $\ell\downto 0$.

Further important results in the theory of hyperbolic surfaces, cf. \cite{Bu}, Theorems 4.1.1 and 4.1.6, 
tell us that the collar regions around geodesics of length $0<\ell<2\arsinh(1)$ are disjoint,
the number of closed geodesics $\si^j$ of length less than $2\arsinh(1)$ %\gcmt{factor 2?} 
is no more than $3(\gamma-1)$, where $\gamma$ is the genus of $M$, and that for any 
$0<\de<\arsinh(1)$ the $\de$-thin part of any 
hyperbolic surface consists solely of (subcylinders of) such collars $\Col^j$. Furthermore 
the $\de$-thin part of each such a collar $\Col(\ell)$, $0<\ell<2\arsinh(1)$ is given by the subcylinder 
%$\Col^j(\de)=
$(-X_\de(\ell),X_\de(\ell))\times S^1$ for 
\beq \label{eq:Xdelta}
X_\de(\ell)=\frac{\pi^2}{\ell}-\frac{2\pi}{\ell}\arcsin\left(\frac{\sinh(\ell/2)}{\sinh(\de)}\right)
\eeq if $\de\in(\ell/2,\arsinh(1))$ respectively by the empty set if $\de\leq \ell/2$. We also remark that using this formula 
one can easily check the intuitively clear fact that $\rho(X_\de)$ is of order $\de$, we write for short $\rho(X_\de)\sim \de$, in the sense that 
there is a constant $C<\infty$ such that for every collar $\Col(\ell)$, $0<\ell<2\arsinh(1)$ and every $\de\in(\ell/2, \arsinh(1))$
%the estimate 
we have
\beq 
\label{est:rho-Xde}
C^{-1}\de\leq \rho(X_\de(\ell))\leq C\cdot \de.
\eeq
%holds true. 

%
In order to prove Lemma \ref{lemma:L1} we need to show that an estimate of the form 
\beqa\label{est:L^1collar} 
\norm{\Psi}_{L^1(\de\text{-thin}(\Col))}%=\int_{-X_\de(\ell)}^{X_\de(\ell)}\int_{S^1} \abs{\psi}\cdot\abs{dz^2} \rho^2 dsd\th
=2\int_{-X_\de(\ell)}^{X_\de(\ell)}\int_{S^1} \abs{\psi} d\th ds
\leq C(\de^{1/2}\norm{\Psi}_{L^1(M,g)}+\norm{\bar \partial \Psi}_{L^1(M,g)})\eeqa
is valid for each such collar $\Col\cong\Col(\ell)$ and each quadratic diffential $\Psi\in\Qu(M,g)$ satisfying $P_g\Psi=0$. Here and in the following $C$ denotes a constant depending only 
on the genus of the 
surface $M$, which we assume to be fixed.

We prove this claim in two steps. First we show that the assumed orthogonality of $\Psi$ to $\Hol(M,g)$ implies estimates on mean values on circles of 
the function $\psi=\psi(s,\th)$ representing $\Psi$ in the collar coordinates $(s,\th)$.
\begin{lemma}\label{lemma:alpha}
Let $(M,g)$ be any closed hyperbolic surface. Then there exists a constant $C<\infty$  depending only on the genus of $M$ such that for
any collar region $\Col=\Col(\ell)$, $\ell>0$, of $(M,g)$ as described above and 
any quadratic differential $\Psi=\psi dz^2\in\Qu(M,g)$ satisfying 
$$P_g\Psi=0,$$ 
the mean values 
$$\alpha(s):=\frac1{2\pi}\int_{S^1 }\psi(s,\th) d\th,\quad s\in (-X(\ell),X(\ell))$$
(where we work with respect to the local complex coordinate $z=s+i\th$ on the collar)
satisfy the estimate
\beq
\int_{-X(\ell)}^{X(\ell)}\abs{\alpha(s)} ds\leq C\cdot \big(\norm{\bar \partial \Psi}_{L^1(M,g)}+ \ell\cdot\norm{\Psi}_{L^1(M,g)}\big).\label{est:alpha-lemma}\eeq
\end{lemma}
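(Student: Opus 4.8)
The plan is to extract the estimate on the zeroth Fourier mode $\alpha(s)$ from the orthogonality $P_g\Psi = 0$ by testing against a carefully chosen holomorphic quadratic differential. The natural candidate is (essentially) $dz^2$ itself, the parallel quadratic differential on the collar, which represents the ``principal part'' of any quadratic differential on a long collar. However, $dz^2$ is only defined on the collar, not globally on $M$, so the first step is to construct a global holomorphic quadratic differential $\Theta \in \Hol(M,g)$ whose restriction to the collar $\Col(\ell)$ looks like $dz^2$ up to controlled errors --- this is exactly the content foreshadowed by Lemma \ref{lemma:basis} in the introduction, so I would simply invoke that construction (or an appropriate precursor of it). Given such a $\Theta = \vartheta\, dz^2$ with $\vartheta$ holomorphic and $\vartheta \approx 1$ on the collar (with, say, $L^\infty$ bounds on $\vartheta - 1$ over the collar, and suitable global $L^2$ and $L^\infty$ bounds depending only on the genus), the orthogonality gives
\[
0 = \langle \Psi, \Theta\rangle_{L^2(M,g)} = 4\int_M \psi\,\bar\vartheta\,\rho^{-2}\,\tfrac{i}{2}\,dz\wedge d\bar z .
\]

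Next I would split this integral as the part over the collar plus the part over $M\setminus\Col$. On the collar, using $|dz^2| = 2\rho^{-2}$ and writing the area form in the $(s,\theta)$ coordinates, the integrand is $\psi\bar\vartheta$ integrated against $ds\,d\theta$ (the conformal factors cancel because a quadratic differential paired with a quadratic differential using the metric is conformally weighted exactly so that $\rho^{-2}\cdot\rho^{-2}\cdot\rho^2 = \rho^{-2}$... more precisely one checks $\langle \psi dz^2, \vartheta dz^2\rangle$ has density $\psi\bar\vartheta |dz^2|^2 dv_g = 4\psi\bar\vartheta \rho^{-4}\cdot \rho^2\,ds\,d\theta$, so in fact there is a surviving $\rho^{-2}$ weight). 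Integrating out $\theta$ on each circle produces $\int \alpha(s)\,\bar\vartheta_0(s)\,\rho^{-2}(s)\,ds$ where $\vartheta_0(s)$ is the zeroth mode of $\vartheta$; since $\vartheta$ is holomorphic on the cylinder its Fourier modes are combinations of $e^{k\ell s/(2\pi)}$-type exponentials, and the zeroth mode is an affine-exponential function close to $1$. The key algebraic point is that $\rho^{-2}(s) = \frac{4\pi^2}{\ell^2}\cos^2(\frac{\ell s}{2\pi})$ is a fixed, explicit, non-negative weight, so $\int \alpha(s)\bar\vartheta_0(s)\rho^{-2}(s)\,ds$ being forced to cancel the ``error'' terms gives control on a weighted $L^1$-type quantity of $\alpha$; I would then need a Harnack-type / pointwise argument to pass from the weighted integral back to $\int_{-X}^{X}|\alpha(s)|\,ds$. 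This last passage is where the genus-dependent constant and the factor $\ell$ on the right enter: the weight $\rho^{-2}$ is comparable to $\ell^{-2}$ on the bulk of the $\delta$-thin part but degenerates near $s = \pm X(\ell)$, so one either works on the slightly smaller $\delta$-thin subcylinder and absorbs the remainder, or uses that $\alpha$ nearly satisfies an ODE (from $\bar\partial\Psi$ small) to propagate the bound to the full collar.

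The remaining terms to estimate are: the contribution of $\langle\Psi,\Theta\rangle$ from $M\setminus\Col$, which is bounded by $\|\vartheta\|_{L^\infty(M\setminus\Col)}\|\Psi\|_{L^1(M\setminus\Col)} \le C\|\Psi\|_{L^1(M,g)}$ (here I pay a constant but no smallness, which is fine since that is the type of term allowed on the right-hand side --- wait, the right-hand side only allows $\ell\cdot\|\Psi\|_{L^1}$, so this is the crux); and the error from $\vartheta_0(s)$ not being exactly $1$ on the collar, together with the contribution of $\bar\partial\Psi$ when one integrates by parts in $s$ to relate $\int\alpha\,\rho^{-2}$ to boundary terms and $\int\bar\partial\Psi$. \textbf{The main obstacle} is precisely controlling the off-collar term $\langle\Psi,\Theta\rangle_{M\setminus\Col}$ by $\ell\cdot\|\Psi\|_{L^1(M,g)}$ rather than merely by $\|\Psi\|_{L^1(M,g)}$: this forces the construction of $\Theta$ (Lemma \ref{lemma:basis}) to be more delicate, producing a holomorphic quadratic differential that not only looks like $dz^2$ on the collar but is \emph{also} small, of size $O(\ell)$ relative to its collar size, \emph{away} from the collar --- i.e. $\Theta$ must be genuinely concentrating on the collar as $\ell\to 0$. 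Equivalently, after normalising $\Theta$ so that its principal part on the collar is $\sim dz^2$, one needs $\|\Theta\|_{L^\infty(M\setminus\Col)} = O(\ell)$. I expect the proof to obtain this from the explicit description of holomorphic quadratic differentials near a pinching collar (model functions $z^{-2}\,dz^2$ in a plumbing coordinate, decaying exponentially in $s$ away from $\sigma$), so that the pairing over $M\setminus\Col$ inherits the $O(\ell)$ decay. Once this concentration estimate for $\Theta$ is in hand, the rest is bookkeeping: collect the collar weighted-$L^1$ bound, the $O(\ell)\|\Psi\|_{L^1}$ off-collar term, and the $\|\bar\partial\Psi\|_{L^1}$ term from the integration by parts, and conclude \eqref{est:alpha-lemma}.
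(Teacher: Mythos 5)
Your overall architecture is the same as the paper's: test the orthogonality against the concentrated holomorphic quadratic differential of Lemma \ref{lemma:basis}, extract a weighted bound on $\alpha$, and then use that $\alpha$ is nearly constant (its $s$-derivative is $\frac1\pi\int_{S^1}\pbz\psi\,d\th$, since the $\partial_\th$ contribution integrates to zero) to convert this into the unweighted bound \eqref{est:alpha-lemma}. That ``nearly an ODE'' step is exactly what the paper does, via $\abs{\alpha(0)-\alpha(s_0)}\leq\frac1\pi\int_{[0,s_0]\times S^1}\abs{\pbz\psi}\,d\th\,ds$ and a Fubini argument; note that executing it requires one more observation you do not mention, namely that the unweighted integral $\int\int\abs{\pbz\psi}(X-\abs{s})\,d\th\,ds$ is controlled by $\norm{\bar\partial\Psi}_{L^1}$ only because $\rho(s)(X-\abs{s})\leq\pi/2$ compensates for the weight $\rho^{-1}$ appearing in $\norm{\bar\partial\Psi}_{L^1(\Col)}=2\sqrt2\int\int\rho^{-1}\abs{\pbz\psi}$.

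The genuine problem is the step you single out as ``the main obstacle'': you claim one must arrange $\norm{\Theta}_{L^\infty(M\setminus\Col)}=O(\ell)$ once the principal part is normalised to $\sim dz^2$. This is both unattainable and unnecessary, and the confusion comes from dropping the weight $\rho^{-2}$ in the collar pairing. Normalising $\Om$ of Lemma \ref{lemma:basis} so that its principal part is $dz^2$ means dividing by $b_0\sim\ell^{3/2}$ (since $\norm{dz^2}_{L^2(\Col)}\sim\ell^{-3/2}$), which turns the off-collar bound $C\ell^{1/2}$ into $C\ell^{-1}$ --- nowhere near $O(\ell)$. But this is fine: the collar contribution to $\langle\Psi,dz^2\rangle_{L^2(\Col)}$ is not $\int\abs{\alpha}\,ds$ but $8\pi\int\alpha\,\rho^{-2}\,ds$, and the weight $\rho^{-2}\sim\ell^{-2}$ with total mass $\int\rho^{-2}\sim\ell^{-3}$ amplifies the $\alpha$-term enormously. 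Concretely, the orthogonality yields $\abs{\int\alpha\rho^{-2}\,ds}\leq C\ell^{-1}\norm{\Psi}_{L^1}$, which for (nearly) constant $\alpha$ gives $\abs{\alpha}\leq C\ell^{2}\norm{\Psi}_{L^1}$ and hence, integrating over the collar of length $2X\sim\ell^{-1}$, exactly the $C\ell\norm{\Psi}_{L^1}$ on the right of \eqref{est:alpha-lemma}. So the $O(\ell^{1/2})$ concentration already provided by Lemma \ref{lemma:basis} suffices, and no sharper construction of $\Theta$ is needed; what remains is to carry out the near-constancy and bookkeeping steps described above, which your plan anticipates but does not execute.
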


In a second step, which will complete the proof of Lemma \ref{lemma:L1}, we then estimate the $L^1$ norm of general quadratic differentials on the thin part of a collar in terms of $\alpha(\cdot)$ and the antiholomorphic derivative.
\begin{lemma}\label{lemma:L1-est-using-alpha}
Let $(M,g)$ be any closed hyperbolic surface. Then there exists a constant $C<\infty$  depending only on the genus of $M$ such that for
any collar region $\Col$, 
any quadratic differential $\Psi\in\Qu(M,g)$ and any $\de>0$, we have
$$\norm{\Psi}_{L^1(\de\text{-thin}(\Col))}\leq C\cdot\big(\int_{-X}^{X}\abs{\alpha(s)} ds+ \norm{\bar\partial \Psi}_{L^1(M,g)}+\delta^{1/2}\cdot\norm{\Psi}_{L^1(M,g)}\big).$$
\end{lemma}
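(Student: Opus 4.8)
\textbf{Proof plan for Lemma \ref{lemma:L1-est-using-alpha}.}

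The idea is to work entirely on a single collar $\Col\cong\Col(\ell)$ with coordinate $z=s+i\th$, $z\in(-X,X)\times S^1$, and to decompose the quadratic differential $\Psi=\psi\,dz^2$ into its zeroth Fourier mode in $\th$ (the ``principal part'') and the remaining higher modes. Writing $\psi(s,\th)=\alpha(s)+\tilde\psi(s,\th)$ where $\alpha(s)=\frac1{2\pi}\int_{S^1}\psi(s,\th)\,d\th$ and $\int_{S^1}\tilde\psi(s,\th)\,d\th=0$ for each $s$, the $L^1$ norm on $\de\thin(\Col)$ splits as
\beqa
\norm{\Psi}_{L^1(\de\thin(\Col))}
&\leq 2\int_{-X_\de}^{X_\de}\!\!\int_{S^1}\abs{\alpha(s)}\,d\th\,ds
+2\int_{-X_\de}^{X_\de}\!\!\int_{S^1}\abs{\tilde\psi(s,\th)}\,d\th\,ds\\
&\leq C\int_{-X}^{X}\abs{\alpha(s)}\,ds
+C\int_{-X_\de}^{X_\de}\!\!\int_{S^1}\abs{\tilde\psi(s,\th)}\,d\th\,ds,
\eeqa
so the first term is already of the required form and everything reduces to controlling the higher-mode part $\tilde\psi$ on the $\de$-thin subcylinder.

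For the higher modes I would exploit that, on each circle $\{s\}\times S^1$, the mean of $\tilde\psi$ vanishes, so the Poincar\'e inequality on $S^1$ gives pointwise-in-$s$ control $\norm{\tilde\psi(s,\cdot)}_{L^1(S^1)}\leq C\norm{\partial_\th\tilde\psi(s,\cdot)}_{L^1(S^1)}=C\norm{\partial_\th\psi(s,\cdot)}_{L^1(S^1)}$. Now the key is to trade the $\th$-derivative for the antiholomorphic derivative and the principal part. Since $\partial_{\bar z}=\tfrac12(\partial_s+i\partial_\th)$, one has $i\,\partial_\th\psi=2\partial_{\bar z}\psi-\partial_s\psi$, and for the higher modes $\partial_s\tilde\psi$ can be related back through a second integration-by-parts / Fourier argument. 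The cleanest route is: on the $n$-th Fourier mode $\psi_n(s)e^{in\th}$ with $n\neq0$, the equation $\partial_{\bar z}\psi = (\bar\partial\Psi)$-component reads $\tfrac12(\psi_n'(s)-n\psi_n(s))=(\partial_{\bar z}\psi)_n$, and solving this ODE with the natural integrating factor $e^{-ns}$, together with the geometric fact that the weight $\rho^{-2}$ is comparable across the collar in a way controlled by $\de^{-1}$ on the thin part (using $\rho(X_\de)\sim\de$ from \eqref{est:rho-Xde} and $|dz^2|=2\rho^{-2}$ from \eqref{sizes_on_collars}), yields
$$\int_{-X_\de}^{X_\de}\abs{\psi_n(s)}\,ds \leq C\Big(\delta^{1/2}\norm{\Psi}_{L^1(M,g)}+\norm{\bar\partial\Psi}_{L^1(M,g)}\Big)$$
after summing a geometric series in $n$; summing over $n\neq0$ and using $\sum_{n\neq0}|n|^{-1}$ does not converge, so in fact one must be more careful and use the full strength of the $e^{\pm ns}$ decay — the higher the mode, the faster it decays away from where it is supported, which is exactly what makes the sum over $n$ convergent with an absolute constant.

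The main obstacle is precisely this last point: controlling the higher Fourier modes on the \emph{thin} part without paying a factor that blows up as $\ell\to0$, and doing so with a constant independent of which collar one is on. The trick is that a higher mode which is large somewhere on the thick part of the collar must, by the exponential growth/decay of solutions of $\psi_n'-n\psi_n=2(\partial_{\bar z}\psi)_n$, decay exponentially (in the $s$-variable, at unit rate per mode) as one moves into the middle; and the $\de$-thin part sits in the middle $|s|<X_\de$ with $X-X_\de$ bounded below by a definite amount when $\de$ is below a fixed threshold. Combined with the $\de^{1/2}$ gain coming from the change of weights $\rho^{-2}$ (whose integral over the thin part contributes the $\de^{1/2}$), this lets one absorb $\norm{\Psi}_{L^1(M,g)}$ with the stated power of $\de$ and the $\bar\partial$-term with an absolute constant, and summing over $|n|\geq1$ then converges geometrically. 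For $\de$ above the fixed threshold $\arsinh(1)$ (or for collars with $\ell\geq2\arsinh(1)$) the $\de$-thin part of $\Col$ is empty or the estimate is trivial, so one may assume $\de<\arsinh(1)$ throughout. Finally, for any $s$-circle lying outside all collars, there is nothing to prove since the $\de$-thin part is contained in the union of collars; and summing the per-collar estimate over the at most $3(\gamma-1)$ short geodesics gives the genus-dependent constant in the statement.
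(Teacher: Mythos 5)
Your reduction to the principal part $\alpha(s)$ plus the zero-mean remainder $\tilde\psi$ is a reasonable starting point, and your treatment of the homogeneous part of each mode is sound: since $X-X_{2\de}\gtrsim \de^{-1}$ for small $\de$, the factor $e^{n(X_\de-s_1)}\leq e^{-cn/\de}$ does sum geometrically in $n$ and even beats the required $\de^{1/2}$ (this is essentially the collar decay estimate \eqref{est-collar-decay}). The genuine gap is in the inhomogeneous (Duhamel) part of your ODE $\tfrac12(\psi_n'-n\psi_n)=(\pbz\psi)_n$. Solving and integrating over $s\in(-X_\de,X_\de)$ gives, for each $n>0$, a forcing contribution bounded by $\tfrac{1}{n}\int\abs{(\pbz\psi)_n(t)}\,dt$, where $t$ ranges over the thin part itself; here the exponential kernel $e^{n(s-t)}$ is of order $1$ for $t$ near $s$, so there is no extra decay in $n$ to be extracted — your proposed fix (``the higher the mode, the faster it decays away from where it is supported'') only applies to the homogeneous solution propagating in from the thick ends, not to the source term living inside the thin part. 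You are then left needing $\sum_{n\neq 0}\abs{n}^{-1}\abs{(\pbz\psi)_n(t)}\leq C\norm{\pbz\psi(t,\cdot)}_{L^1(S^1)}$, which is false for general $L^1$ data: this is Hardy's inequality, valid on the Hardy space $H^1$ but not on $L^1$ (one can prescribe Fourier coefficients like $1/\log n$ for an $L^1$ function, making the sum diverge). Since the hypothesis only gives $L^1$ control of $\bar\partial\Psi$, the mode-by-mode summation cannot close. The extra weight $\rho^{-1}\sim\de^{-1}$ hidden in $\norm{\bar\partial\Psi}_{L^1}$ does not help either, as it multiplies the divergent harmonic sum by a constant rather than truncating it.

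This is precisely why the paper does not Fourier-decompose the error term. Instead it applies the inhomogeneous Cauchy formula on rectangles $\Om_b(z_0)$ of side length $\sim\rho^{-1/2}(s_0)$ centred at each point $z_0$ of the thin part, averaged over a one-parameter family of such rectangles. The area term involving $\pbz\psi$ is then controlled through the locally integrable kernel $1/(z-z_0)$, which handles all Fourier modes simultaneously; wrapping the rectangle around the cylinder costs a factor $\rho^{-1/2}\log(\rho^{-1})$, which is exactly absorbed by the weight $\rho^{-1}$ present in $\norm{\bar\partial\Psi}_{L^1}$. The boundary integrals are what produce the $\int\abs{\alpha}$ and $\de^{1/2}\norm{\Psi}_{L^1}$ terms (the vertical segments are near-constant multiples of $\alpha(h^\pm(s_0))$, and the averaging in $b$ converts the horizontal integrals into $\th$-averages as well). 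If you want to rescue your Fourier approach, you would have to replace the Duhamel estimate by a representation of the zero-mean part of $\psi$ as a Cauchy transform of $\pbz\psi$ plus a holomorphic remainder — at which point you have effectively rederived the paper's argument.
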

Since the $\de$-thin part of any collar $\Col(\ell)$ with $\ell>2\de$ is the empty set, and since the claim of Lemma \ref{lemma:L1} is trivially true for 
large values of $\de$, combining Lemmas \ref{lemma:alpha} and \ref{lemma:L1-est-using-alpha} immediately gives the claim of the key Lemma \ref{lemma:L1}.

Before we prove Lemma \ref{lemma:alpha}, we remark that the estimate claimed in that lemma remains valid if we 
weaken the assumption of $\Psi$ being orthogonal to the whole space $\Hol(M,g)$, and only demand $\Psi$ to be orthogonal 
to one specific holomorphic quadratic differential (per collar), which is described as follows:
\begin{lemma}\label{lemma:basis}
Let $(M,g)$ be any closed hyperbolic surface. Then there exists a constant $C<\infty$  depending only on the genus of $M$ such that for
$\si$ any closed
geodesic of length $0<\ell\leq 2\arsinh(1)$ and $\Col$ its collar neighbourhood described above, there exists a holomorphic
quadratic differential $\Om$ with $\norm{\Om}_{L^2(M,g)}=1$, concentrated on this one collar in the sense that 
\beq\label{est:Om-complement}
\norm{\Om}_{L^\infty(M\setminus \Col,g)}\leq C\ell^{1/2},
\eeq
and on this collar essentially given as a constant multiple of $dz^2$ in the sense that
\beq\label{est:Om-collar} 
\norm{\Om-b_0dz^2}_{L^\infty(\Col,g)}\leq C \ell^{1/2},
\eeq
for a number $b_0\in\C$ satisfying $\abs{1-\abs{b_0}\cdot \norm{dz^2}_{L^2(\Col,g)}}\leq C\ell^{1/2}$.
\end{lemma}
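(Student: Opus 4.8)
The plan is to construct $\Om$ as the $L^2$-normalisation of a holomorphic quadratic differential that agrees to leading order, on the collar $\Col=\Col(\ell)$, with the parallel differential $dz^2$. Fix a small $\de_0\in(0,\tfrac12\arsinh(1))$ and a cutoff $\chi\colon M\to[0,1]$ depending only on $s$, with $\chi\equiv 1$ on the $\de_0$-thin part of the collar and $\chi\equiv 0$ outside its $2\de_0$-thin part; in the transition region $\rho$ is comparable to a fixed constant, so $\abs{\na\chi}_g$ and $\abs{dz^2}_g$ are bounded there and $\bar\partial(\chi\,dz^2)=\bar\partial\chi\otimes dz^2$ has $L^1$, $L^2$ and $L^\infty$ norms bounded independently of $\ell$. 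I claim it suffices to produce $\Om_0\in\Hol(M,g)$ with $\norm{\Om_0-\chi\,dz^2}_{L^2(M,g)}\leq C$, $C$ depending only on the genus; one then sets $\Om:=\Om_0/\norm{\Om_0}_{L^2(M,g)}$ and $b_0:=1/\norm{\Om_0}_{L^2(M,g)}$.

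To see that this reduction is enough, analyse the error $\eta:=\chi\,dz^2-\Om_0$. By construction $\eta$ is holomorphic off the transition region, where $\bar\partial\eta=\bar\partial\chi\otimes dz^2$ has bounded norms on a region of uniformly controlled geometry; standard elliptic estimates for $\bar\partial$ then give $\norm{\eta}_{L^\infty}\leq C$ on $M\setminus\Col$ and near the ends of $\Col$. On the part of $\Col$ where $\eta$ is holomorphic, Fourier-expand $\eta=\sum_n a_n e^{nz}\,dz^2$ in $z=s+i\th$: the parallel coefficient satisfies $\abs{a_0}\leq C\ell^{3/2}\norm{\eta}_{L^2(\Col)}\leq C\ell^{3/2}$, because the parallel mode has $L^2$-footprint $\norm{dz^2}_{L^2(\Col)}^2\sim\ell^{-3}$ by \eqref{sizes_on_collars}, while the non-parallel modes are pinned by the bounded geometry at the two ends of the collar and decay exponentially towards the centre. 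Together with $\abs{dz^2}_g\leq C\ell^{-2}$, with $\rho(X_{\de_0})\sim\de_0$ from \eqref{est:rho-Xde}, and with $\norm{dz^2}_{L^2(\Col)}\sim\ell^{-3/2}$, this gives in a few lines $\norm{\eta}_{L^\infty(M)}\leq C\ell^{-1/2}$, $\norm{\Om_0}_{L^2}=\norm{dz^2}_{L^2(\Col)}\bigl(1+O(\ell^{3/2})\bigr)$, and $\Om_0=-\eta$ on $M\setminus\Col$. Since $\ell\leq 2\arsinh(1)$, the bounds \eqref{est:Om-complement}, \eqref{est:Om-collar} and $\abs{1-\abs{b_0}\cdot\norm{dz^2}_{L^2(\Col)}}\leq C\ell^{1/2}$ all follow, in fact with $\ell^{3/2}$ and $\ell$ in place of $\ell^{1/2}$.

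The heart of the matter is the construction of $\Om_0$. Arguing by contradiction and passing to a subsequence, it is enough to treat a sequence of surfaces degenerating in the Deligne-Mumford sense, so that near $\si$ the surface is, up to exponentially small errors, a plumbing of one or two (possibly themselves degenerating) closed pieces $M_1,M_2$ along coordinates $w_1,w_2$ with $w_1 w_2=t$, $\abs{t}\sim e^{-2X(\ell)}$, the collar corresponding to the glued annulus and $dz^2=dw_1^2/w_1^2=dw_2^2/w_2^2$. The differential $dz^2$ extends from the neck into $M_j$ as a meromorphic quadratic differential with a double pole at the attaching point $p_j$, so by the Riemann-Roch theorem one can choose a meromorphic quadratic differential $\Xi_j$ on $M_j$ with principal part $dw_j^2/w_j^2$ at $p_j$ — at worst up to a simple-pole term, which can only occur on a genus-zero piece and contributes only a harmless lowest-order mode on the neck — holomorphic elsewhere, and normalised so that $R_j:=\Xi_j-dz^2$ is holomorphic near $p_j$ with $L^\infty$-norm $O(1)$. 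Set $\Om_0:=\Xi_j$ on the $M_j$-part of $M$ and $\Om_0:=dz^2+R_1^{\mathrm{ext}}+R_2^{\mathrm{ext}}$ on the neck, where $R_j^{\mathrm{ext}}$ is the holomorphic continuation of $R_j$ across the neck; being a power series in $w_j$, each $R_j^{\mathrm{ext}}$ is exponentially small away from the $p_j$-end of the neck, so the two definitions agree up to an exponentially small error on the uniformly controlled interface regions, and smoothing this mismatch with a cutoff there produces $\Om_0$ with $\norm{\Om_0-\chi\,dz^2}_{L^2(M,g)}\leq C$ and $\norm{\bar\partial\Om_0}_{L^2(M,g)}$ exponentially small in $1/\ell$. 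Finally replace $\Om_0$ by its $L^2(M,g)$-orthogonal projection onto $\Hol(M,g)$: this changes it by $\norm{\Om_0-P_g(\Om_0)}_{L^2}\leq\lambda_1^{-1/2}\norm{\bar\partial\Om_0}_{L^2}$, where $\lambda_1$ is the bottom of the spectrum of $\bar\partial^*\bar\partial$ on the orthogonal complement of $\Hol(M,g)$ and is bounded below by a fixed power of $\ell$, so the change is still exponentially small and $P_g(\Om_0)\in\Hol(M,g)$ is the required differential.

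The main obstacle is to make the plumbing estimates uniform when the pieces $M_j$ themselves degenerate: one needs the meromorphic differential $\Xi_j$ with the prescribed double pole, its holomorphic continuation $R_j$, and the eigenvalue $\lambda_1$ all controlled independently of the geometry of $M_j$. This is handled by an induction on the number of collapsing geodesics, the attaching point $p_j$ staying in the thick part of $M_j$ at each stage; alternatively, the existence of such a concentrated, asymptotically parallel holomorphic quadratic differential on a degenerating family, together with uniform control of its $L^p$-norms, is precisely the output of the structure theory of holomorphic quadratic differentials near the boundary of moduli space developed in \cite{R-T-Z}, from which the bound $\norm{\Om_0-\chi\,dz^2}_{L^2(M,g)}\leq C$ can be quoted directly.
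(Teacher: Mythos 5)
Your overall strategy is a genuinely different one from the paper's: you attempt a direct construction of $\Om$ by plumbing, prescribing a double pole via Riemann--Roch on the separated pieces and then correcting by a projection, whereas the paper argues by contradiction, invokes Deligne--Mumford compactness, and \emph{selects} $\Om$ as the distinguished unit element of $W^{\perp}$ (the orthogonal complement of the holomorphic differentials with vanishing principal parts on all degenerating collars), deducing the $L^{\infty}$ bounds from an $L^1$ concentration argument together with the collar decay estimate \eqref{est-collar-decay} and the interior estimate \eqref{est:holo-thick}. The direct route is classical and could in principle work, but as written it has two genuine gaps.

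First, your reduction is insufficient: knowing only $\norm{\Om_0-\chi\,dz^2}_{L^2(M,g)}\leq C$ does not yield \eqref{est:Om-complement}. The surface $(M,g)$ may carry \emph{other} short geodesics besides $\si$, and on a collar $\Col'$ of length $\ell'$ the parallel mode $a_0'\,dz'^2$ of a holomorphic quadratic differential satisfies $\norm{a_0'dz'^2}_{L^\infty(\Col')}\sim \abs{a_0'}\ell'^{-2}$ while the $L^2$ bound only forces $\abs{a_0'}\lesssim \ell'^{3/2}$; after multiplying by $b_0\sim\ell^{3/2}$ this gives $\ell^{3/2}\ell'^{-1/2}$, which exceeds $\ell^{1/2}$ whenever $\ell'\ll\ell^2$. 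So the $L^\infty$ bound on $M\setminus\Col$ fails unless the construction additionally kills (or strongly controls) the principal parts of $\Om_0$ on every other thin collar -- exactly the extra condition the paper builds in by taking $\Om\in W^{\perp}$ with $b_0^j(\Om)=0$ for $j\neq 1$. Your elliptic estimate ``$\norm{\eta}_{L^\infty}\leq C$ on $M\setminus\Col$'' silently assumes bounded geometry there, which is false on those other collars. Second, the uniformity of the constant over all of moduli space -- which is the entire content of the lemma -- is deferred rather than proved: the proposed ``induction on the number of collapsing geodesics'' is not carried out (and would have to confront the first gap at each stage), and the suggestion that the required bound can be ``quoted directly'' from \cite{R-T-Z} is not accurate, since the existence of a concentrated, asymptotically parallel element of $\Hol(M,g)$ per collar is precisely what this lemma supplies as a complement to the structure theory of \cite{R-T-Z}, not something contained in it. (The asserted lower bound on the first nonzero eigenvalue of $\bar\partial^*\bar\partial$ is likewise used without justification, though this is minor by comparison.)
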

\begin{proof}[Proof of Lemma \ref{lemma:basis}]
We prove the lemma by contradiction.
Suppose instead that the lemma is false, and thus that there exist a closed surface $M$, a sequence of metrics $g_i$ on $M$, 
and a sequence of collars $\Col_i$ on $(M,g_i)$ corresponding to closed geodesics $\si_i$ of length $0<\ell_i\leq 2\arsinh(1)$ so that for each $i$, whenever $\Om$ is a holomorphic quadratic differential
on $(M,g_i)$ with $\norm{\Om}_{L^2(M,g_i)}=1$ then at least one of the two bounds
\beq \label{est:Om-complement2}
\norm{\Om}_{L^\infty(M\setminus \Col_i,g_i)}\leq  i\,\ell_i^{1/2},
\eeq
or
\beq \label{est:Om-collar2} 
\norm{\Om-b_0dz^2}_{L^\infty(\Col_i,g_i)}\leq i\, \ell_i^{1/2},
\text{ for some } b_0\in\C \text{ with }\abs{1-\abs{b_0}\cdot \norm{dz^2}_{L^2(\Col_i,g_i)}}\leq i\ell_i^{1/2}
\eeq
must be \textit{violated}.
To derive a contradiction, we thus construct elements $\Om_i$ which fullfill the two estimates \eqref{est:Om-complement2} and \eqref{est:Om-collar2} for $i$ sufficiently large.
 
We first remark that standard estimates for holomorphic functions on discs lead to an estimate of the form
\beq\label{est:holo-thick}
\norm{\Phi}_{{L^{\infty}(\de\thick(M,g))}}\leq C_\de\norm{\Phi}_{L^1(M,g)}\eeq
valid for all \textit{holomorphic} quadratic differentials $\Phi$ on a hyperbolic surface $(M,g)$ and every $\de>0$
with $C_\de$ depending only on $\de$ and the genus of the surface, cf. Lemma A.9 in \cite{R-T-Z}.
We conclude that the sequence of surfaces $(M,g_i)$ introduced above must degenerate as $i\to \infty$; 
indeed, assume that (for a subsequence) the injectivity radius of $(M,g_i)$ is bounded away from zero by some number $\de>0$, and thus that  
the length of any closed geodesic of $(M,g_i)$ is no less than $2\de$. Then estimate \eqref{est:holo-thick} implies that for $i$ sufficiently large
\eqref{est:Om-complement2} and \eqref{est:Om-collar2} are both satisfied, say for $b_0=0$, for every $\Om\in\Hol(M,g_i)$ with $\norm{\Om}_{L^2}=1$, leading to a contradiction.

The sequence $(M,g_i)$ can thus be analysed with the Deligne-Mumford compactness theorem in the same way as earlier, 
collapsing $k$ collars $\Col_i^j=\Col(\ell_i^j)$, $\ell_i^j\to 0$ %(of which we may assume the first one, $\Col_i^1$, corresponds to the $\Col_i$ already considered) 
and yielding a limit $(\Si,h)$.

The Fourier decomposition of holomorphic quadratic differentials $\Phi$ on each hyperbolic collar $(\Col,g)$ 
\beq \Phi=\bigg(\sum_{n=-\infty}^\infty b_n e^{ns}\,e^{in\th}\bigg)\cdot dz^2, \quad b_n\in\C\eeq
gives an $L^2(\Col,g)$-orthogonal decomposition of each such $\Phi$
into its principal part $b_0(\Phi)dz^2$ and its \emph{collar decay} part $\om^\perp(\Phi) dz^2:=\Phi-b_0(\Phi)dz^2$ which,
by Lemma 2.2 and Remark 2.3 of \cite{R-T-Z} 
satisfies the key estimate
\beq \label{est-collar-decay}
\norm{\om^\perp(\Phi) dz^2}_{L^{\infty}(\de\thin(\Col,g))}\leq C\de^{-2}e^{-\frac\pi\de}\norm{\Phi}_{L^1(M,g)}.
\eeq

Following \cite{R-T-Z}, we can then define the subspaces 
%$W_i\subset \Hol(M,g_i)$ as in to be 
\beq \label{def:Wi} W_i:=\{\Th\in\Hol(M,g_i):\, b_0^{j}(\Th)dz^2=0 \text{ for every } j\in\{1\ldots k\}\,\}\eeq
of all holomorphic quadratic 
differentials with principal part equal to zero on each degenerating collar $\Col_i^j$, $j=1\ldots k$, and it is these 
subspaces $W_i$ which converge to $\Hol(\Si,h)$ in the sense of Lemma \ref{lemma:convergence} (as described in \cite{R-T-Z}). We furthermore remark that 
elements of $W_i$ are uniformly controlled by their $L^2$ norm, 
\beq\label{Wi-Linfty} \sup_{w\in W_i}\frac{\norm{w}_{L^\infty(M,g_i)}}{\norm{w}_{L^2(M,g_i)}}\leq C<\infty\eeq
for a constant $C$ independent of $i$ (as follows from Lemma 2.4(i) and Lemma A.8 of \cite{R-T-Z}).

This implies in particular that the collar $\Col_i$, for which \eqref{est:Om-complement2} and \eqref{est:Om-collar2} cannot be satisfied, must degenerate, $\ell_i\to 0$ as $i\to \infty$, 
and thus that this collar coincides with one of the
collapsing collars $\Col_i^j$, 
say $\Col_i=\Col_i^1$ (for a subsequence).

We will now choose the holomorphic quadratic differentials $\Om_i$ associated with these collars as elements of the $L^2(M,g_i)$-orthogonal complement $W_i^{\perp}$ of $W_i$. 
More precisely, by Lemma 2.4 of \cite{R-T-Z}, 
we have $\dim(W_i^{\perp})=k$ for large enough $i$, so
we can assign to $\Col_i=\Col_i^1$ the unique element $\Om_i$ of $W_i^\perp$ with $\norm{\Om_i}_{L^2(M,g_i)}=1$ for which 
the principal part $b_0^{j}(\Om_i)dz^2$ on $\Col_i^{j}$ is equal to zero if $j\neq 1$ but is 
$b_0(\Om_i)dz^2$ for some $b_0(\Om_i)>0$ if $j=1$. We then claim that 
\beq \label{est:L1-Omij}
\lambda_i:=\norm{\Om_i}_{L^1(M,g_i)}\leq C\cdot [\ell_i]^{1/2}\eeq
and remark that this claim combined with \eqref{est:holo-thick} and \eqref{est-collar-decay} % and the orthogonality of principal and collar-decay part if we want a better estimate for b which I don't think we care about
directly implies \eqref{est:Om-complement2} and \eqref{est:Om-collar2}, thus giving the contradiction that proves
Lemma \ref{lemma:basis}. 

In order to prove the bound \eqref{est:L1-Omij} we now consider the sequence $\widetilde{\Om}_i:= (\lambda_i)^{-1}\cdot\Om_i$ of holomorphic quadratic differentials normalised to have
$L^1$ norm $\norm{\widetilde\Om_i}_{L^1(M,g_i)}=1$ and prove that the only part of $\widetilde{\Om}_i$ whose contribution to the $L^1$ norm does not vanish as $i\to \infty$ is its 
principal part on $\Col_i$.

To start with, Lemma \ref{lemma:convergence} allows us to extract a subsequence of $\widetilde{\Om}_i$ so that 
$f_i^*\widetilde\Om_i$ converges smoothly locally to a limit 
$\widetilde\Om_\infty\in\Hol(\Si,h)$, which must indeed be identically zero since by construction 
$P_{g_i}^{W_i}\widetilde\Om_i=0$ 
and thus according to Lemma \ref{lemma:projection} also $P_h^{\Hol(\Si,h)}(\widetilde\Om_\infty)=0$.
%, and so 
%$\widetilde\Om_\infty\equiv 0$. 
We conclude that
$$0=\norm{\widetilde\Om_\infty}_{L^1(\Si,h)}=1-\inf_{\de>0}\lim_{i\to \infty}\norm{\widetilde\Om_i}_{L^1(\de\thin(M,g_i))}$$
which means that for any $\de>0$ all of the $L^1$ norm of $\widetilde\Om_i$ concentrates in the limit $i\to \infty$ on the $\de$-thin part of $(M,g_i)$.

We observe that for $\de>0$ sufficiently small, the $\de$-thin part of $(M,g_i)$ is given as the union of the $\de$-thin parts of the degenerating collars $\Col_i^{j}$, but that 
estimate \eqref{est-collar-decay} implies 
$\norm{\widetilde\Om_i}_{L^1(\de\thin(\Col_i^{j}))}\leq C \de^{-2}e^{-\frac\pi\de}\to 0$ as $\de\searrow 0$ for each $j\neq 1$.

Meanwhile, \eqref{est-collar-decay} applied to $\Col_i^{1}=:\Col_i$ shows that the contribution of the collar decay part 
$\om^\perp(\widetilde\Om_i) dz^2$ 
of 
$\widetilde\Om_i$ on $\Col_i$ to the total $L^1$ norm of $\norm{\widetilde \Om_i}_{L^1(M,g_i)}=1$ vanishes in the limit $i\to \infty$. This 
means that the only remaining part of $\widetilde\Om_i$, namely the principal part $b_0(\widetilde\Om_i)\cdot dz^2=(\lambda_i)^{-1}b_0(\Om_i)\cdot dz^2$ of $\widetilde \Om_i$ on this one collar $\Col_i$, 
must have $L^1$ norm converging to $1$ as $i\to \infty$. 
Since  
$\norm{dz^2}_{L^1(\Col_i)}=8\pi\cdot X(\ell_i)\leq C\cdot [\ell_i]^{-1}$ 
we thus get an upper bound of 
$$\norm{\Om_i}_{L^1(M,g_i)}=:\lambda_i\leq C\cdot [\ell_i]^{-1} b_0(\Om_i)$$
for the $L^1$ norm of the original holomorphic quadratic differential. But with $\Om_i$ normalised to have $\norm{\Om_i}_{L^2}=1$ and with the 
principal and collar decay part being $L^2$-orthogonal we also know that 
$\norm{ b_0(\Om_i)\cdot dz^2}_{L^2(\Col_i,g_i)}\leq 1$ which, according to \eqref{sizes_on_collars}, means that $b_0(\Om_i)\leq C\cdot[\ell_i]^{3/2}$. The claim \eqref{est:L1-Omij} now follows.
\end{proof}

In summary, from the previous lemma, its proof and the analysis of the spaces $W_i$ carried out in \cite{R-T-Z}, 
we obtain the following general description of the spaces of 
holomorphic quadratic differentials on degenerating hyperbolic surfaces.
\begin{cor}
Let $(M,g_i)$ be a sequence of hyperbolic surfaces degenerating to a punctured hyperbolic surface 
$(\Si,h)$ by collapsing $k$ collars $\Col_i^j$. Then, for $i$ sufficiently large, the space of holomorphic quadratic differentials $\Hol(M,g_i)$
splits into
\begin{enumerate}
 \item[(i)] the $3(\gamma-1)-k$ dimensional subspace $W_i$ defined in \eqref{def:Wi} which converges to the space $\Hol(\Si,h)$ of 
$L^1$ holomorphic quadratic differentials on the limit surface as described in Theorem 2.6 of \cite{R-T-Z}, and
\item[(ii)] its orthogonal complement $W_i^\perp$, a basis of which is given by holomorphic quadratic differentials $(\Om_i^j)_{j=1}^k$ concentrating solely on the degenerating collars
$\Col_i^j$ as described in Lemma \ref{lemma:basis}.
\end{enumerate}
\end{cor}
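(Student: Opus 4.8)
The plan is to assemble this corollary from ingredients already in place: Lemma~\ref{lemma:basis} together with its proof, and the analysis of the spaces $W_i$ carried out in \cite{R-T-Z}. First I would recall from Lemma~2.4 of \cite{R-T-Z} that for $i$ large enough the subspace $W_i\subset\Hol(M,g_i)$ defined in \eqref{def:Wi} has dimension $3(\gamma-1)-k$, so that its $L^2(M,g_i)$-orthogonal complement $W_i^\perp$ inside $\Hol(M,g_i)$ has dimension exactly $k$; since $W_i^\perp$ is taken inside the finite-dimensional space $\Hol(M,g_i)$ this already gives the claimed $L^2(M,g_i)$-orthogonal splitting $\Hol(M,g_i)=W_i\oplus W_i^\perp$.

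Part~(i) requires no new work: the assertion that $W_i$ converges to the space $\Hol(\Si,h)$ of $L^1$ holomorphic quadratic differentials on the limit surface, with no concentration of norm on the degenerating collars, is precisely Theorem~2.6 of \cite{R-T-Z}, which we have already invoked in the discussion following Lemma~\ref{lemma:projection} and in the proof of Lemma~\ref{lemma:basis}.

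For part~(ii) I would run the construction in the proof of Lemma~\ref{lemma:basis} for each of the $k$ collapsing collars $\Col_i^j$ in turn: this produces, for $i$ sufficiently large, holomorphic quadratic differentials $\Om_i^j\in W_i^\perp$ normalised by $\norm{\Om_i^j}_{L^2(M,g_i)}=1$ whose principal part on $\Col_i^{j'}$ vanishes for $j'\neq j$ and equals $b_0(\Om_i^j)\,dz^2$ with $b_0(\Om_i^j)>0$ on $\Col_i^j$, and the concentration properties \eqref{est:Om-complement}, \eqref{est:Om-collar} together with the $L^1$ bound \eqref{est:L1-Omij} (whose derivation combines Lemmas~\ref{lemma:convergence} and \ref{lemma:projection} with the collar-decay estimate \eqref{est-collar-decay} and the thick-part bound \eqref{est:holo-thick}) are exactly the content of Lemma~\ref{lemma:basis}. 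The one point still to check is that $(\Om_i^j)_{j=1}^k$ is a basis of $W_i^\perp$: the key observation is that the linear map $W_i^\perp\to\C^k$ assigning to $\Th$ its tuple of principal-part coefficients $(b_0^1(\Th),\dots,b_0^k(\Th))$ has kernel $W_i\cap W_i^\perp=\{0\}$ by the very definition \eqref{def:Wi} of $W_i$, hence is an isomorphism since both spaces have dimension $k$; as $\Om_i^j$ is sent to $b_0(\Om_i^j)\,e_j$ with $b_0(\Om_i^j)>0$, linear independence, and so the basis property, follows. I do not expect any serious obstacle here: the corollary is a summary, and the only mildly substantive step, the linear independence of the $\Om_i^j$, reduces to this one-line kernel computation (which is in fact already implicit in the uniqueness assertion used within the proof of Lemma~\ref{lemma:basis}).
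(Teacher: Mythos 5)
Your proposal is correct and takes essentially the same route as the paper, which presents this corollary purely as a summary of Lemma \ref{lemma:basis}, its proof, and the results of \cite{R-T-Z}, without a separate argument. The one detail you add explicitly -- that the principal-part map $W_i^\perp\to\C^k$ is an isomorphism because its kernel is $W_i\cap W_i^\perp=\{0\}$, which yields both the existence/uniqueness of the $\Om_i^j$ and their linear independence -- is exactly what the paper leaves implicit in the uniqueness assertion within the proof of Lemma \ref{lemma:basis}.
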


\begin{proof}[Proof of Lemma \ref{lemma:alpha}]
Let $(M,g)$ be a closed hyperbolic surface and $\Col=\Col(\ell)$ a collar around a closed geodesic in $(M,g)$. Without loss of generality, we may assume that $\ell\leq 2\arsinh(1)$, and can apply Lemma \ref{lemma:basis} to obtain the corresponding holomorphic quadratic differential $\Om$.
To prove \eqref{est:alpha-lemma}
it is enough to consider collars around geodesics of small length $\ell$, in particular small enough so that the number $b_0$,
as in Lemma \ref{lemma:basis} characterising the principal part of $\Om$ on $\Col$, satisfies
$|b_0|\geq \norm{dz^2}_{L^2(\Col)}^{-1}(1-C\ell^{1/2})
\geq c\ell^{3/2}$ for some universal $c>0$, compare \eqref{sizes_on_collars}. 

Given any quadratic differential $\Psi\in\Qu(M,g)$ that is orthogonal to $\Om$, we combine the relation $\langle \Psi, \Om\rangle_{L^2(M,g)}=0$ with this bound on $b_0$ and with \eqref{sizes_on_collars} to find that 
\beqa 
\label{est:orthog} 
\ell^{3/2}\abs{\int_{-X}^{X}\int_{S^1}\psi \cdot \rho^{-2}d\th ds}&\leq C\cdot \abs{\langle\Psi,b_0 dz^2\rangle_{L^2(\Col,g)}}\\
&\leq C\left(\abs{\langle\Psi,\Om-b_0 dz^2\rangle_{L^2(\Col,g)}}+\abs{\langle\Psi,\Om\rangle_{L^2(M\setminus \Col,g)}}\right)\\
&\leq C\left(\norm{\Om-b_0dz^2}_{L^{\infty}(\Col,g)}+\norm{\Om}_{L^\infty(M\setminus \Col,g)}\right)\cdot \norm{\Psi}_{L^1(M,g)}\\
&\leq C\ell^{1/2}\cdot \norm{\Psi}_{L^1(M,g)},
\eeqa 
or equivalently that the mean values $\alpha(s)$ are small on average in the sense that they satisfy 
\beq\label{hamster}
\abs{\int_{-X(\ell)}^{X(\ell)}\alpha(s)\rho^{-2}(s) ds}\leq C \ell^{-1}\norm{\Psi}_{L^1(M,g)}.  %,$ $X=X(\ell).
\eeq

Note that if $\Psi$ were holomorphic, then the function $s\mapsto \alpha(s)$ would be constant
and \eqref{hamster} would imply
$\abs{\alpha}\leq C\ell^{2}\norm{\Psi}_{L^1}$ and thus in particular the estimate of Lemma \ref{lemma:alpha}.
For general quadratic differentials $\Psi\in\Qu(M,g)$ the function $s\mapsto\alpha(s)$ need not be constant but we can still estimate
\beqa \label{est:alpha-diff} \abs{\alpha(0)-\alpha(s_0)} 
&= \abs{\frac1{2\pi}\int_0^{s_0}\frac{d}{ds}\left(\int_{\{s\}\times S^1} \psi d\th\right) ds}= \abs{\frac1{2\pi}\int_0^{s_0}\int_{S^1}(\partial_s\psi +i\partial_\th \psi)\,d\th ds}\\
&\leq \frac1\pi \int_{[0,s_0]\times S^1}\abs{\pbz \psi} d\th ds
\eeqa
for each $s_0\in(-X,X)$, where we abuse notation by allowing 
$[0,s_0]$ to denote $[s_0,0]$ for $s_0<0$. 
Using \eqref{sizes_on_collars}, we then write 
$$\alpha(0)\cdot \norm{dz^2}_{L^2(\Col,g)}^2=8\pi\int_{-X}^X(\alpha(0)-\alpha(s_0))\rho^{-2}(s_0) ds_0+4\int_{-X}^{X}\int_{S^1}\psi(s,\th) \rho^{-2}(s)d\th ds$$
and use \eqref{sizes_on_collars}, \eqref{est:orthog} and \eqref{est:alpha-diff} 
to estimate
\beqa \label{est:alpha0}
\abs{\alpha(0)}&\leq C\ell^3 \left[\int_0^X\bigg( \rho^{-2}(s_0) \int_{-s_0}^{s_0}\int_{S^1}\abs{\pbz \psi} d\th ds\bigg) ds_0 + \ell^{-1}\norm{\Psi}_{L^1(M,g)}\right]\\
&\leq C \ell^3\left(\int_{-X}^X\int_{S^1}\rho^{-1}\abs{\pbz \psi} d\th ds\right)\cdot \left(\int_0^X\rho^{-1}(s_0) ds_0\right)  +C \ell^2\norm{\Psi}_{L^1(M,g)}\\
&\leq C \ell \norm{\bar \partial \Psi}_{L^1(M,g)}+C \ell^2\norm{\Psi}_{L^1(M,g)}.
\eeqa
Here we use that 
\beqs%\label{eq:L1-pbz} 
\abs{\bar \partial \Psi}=\abs{\pbz \psi}\cdot \abs{d\bar z \tensor dz^2 }=2\sqrt{2} \abs{\pbz \psi}\rho^{-3}\eeqs
so that
\beqs %\label{est:psi-bar}
\norm{\bar \partial \Psi}_{L^1(\Col,g)}=2\sqrt{2}\int_{-X}^{X}\int_{S^1}\rho^{-1}(s)\abs{\pbz \psi(s,\th)} d\th ds.\eeqs
We also used that $\rho(s)$ is monotone in $\abs{s}$ and that $\int_0^{X}\rho^{-1}(s_0) ds_0=\frac{2\pi}{\ell}\int_0^{X(\ell)}\cos\left(\frac{\ell s}{2\pi}\right) ds\leq \left(\frac{2\pi}{\ell}\right)^2$.
Combining \eqref{est:alpha0} with \eqref{est:alpha-diff} we thus find that for each $s_0\in(-X(\ell),X(\ell))$ 
\beq\label{est:alpha2} \abs{\alpha(s_0)}\leq C\ell\ \norm{\bar \partial \Psi}_{L^1(M,g)}+\frac1\pi\int_{[0,s_0]\times S^1}\abs{\pbz \psi} d\th ds+C\cdot \ell^2\norm{\Psi}_{L^1(M,g)}
.\eeq
We stress that the second term on the right-hand side of this estimate is \textit{not} the $L^1$ norm of $\bar \partial \Psi$ over $[0,s_0]\times S^1$ but a 
much smaller integral; indeed the missing factor $\rho^{-1}(s)$ controls the (euclidean) distance of $s$ to the end of the collar since
\beq\label{est:rho} 
\rho(s)\cdot (X(\ell)-\abs{s})\leq\rho(s)\cdot\left(\frac{\pi^2}{\ell}-\abs{s}\right)\leq \sup_{v\in(0,\pi/2)}\frac{v}{\sin(v)}
=\frac{\pi}{2}.\eeq

Integrating \eqref{est:alpha2} using Fubini's theorem as well as $X=X(\ell)\leq \frac{\pi^2}{\ell}$ 
we thus find 
\beqa \int_{-X}^{X}\abs{\alpha(s_0)} ds_0&\leq C\cdot \norm{\bar \partial \Psi}_{L^1(M,g)}+C\int_{-X}^X\int_{S^1}\abs{\pbz \psi}\cdot (X-\abs{s}) d\th ds+C\ell\norm{\Psi}_{L^1(M,g)}\\
&\leq C\norm{\bar \partial \Psi}_{L^1(M,g)}+C\ell\norm{\Psi}_{L^1(M,g)} \label{est:alpha}
\eeqa 
as claimed in Lemma \ref{lemma:alpha}.
\end{proof}

The remaining step in the paper is thus:
\begin{proof}[Proof of Lemma \ref{lemma:L1-est-using-alpha}]
We want to estimate the $L^1$ norm of a general quadratic differentials $\Psi\in\Qu(M,g)$ on the $\de$-thin part of a hyperbolic collar $\Col(\ell)$. 
The basic idea is to extend the function $\psi$ representing $\Psi=\psi dz^2$ on the collar periodically (with period $2\pi$ in the $i$ direction) to a function (still denoted by $\psi$) 
on the set $(-X(\ell),X(\ell))\times \R\subset \R^2\cong \C$ 
and to derive estimates using the inhomogeneous Cauchy-formula on large domains.
Before we proceed with the proof, we remark that the estimate of Lemma \ref{lemma:L1-est-using-alpha} is trivially true for large values of $\de$ so that we may henceforth assume that 
$0<\de<\de_0$ and thus also $0<\ell<2\de_0$ for a small number $\de_0>0$ to be chosen later on. 

Recall that for any $z_0\in \C$, any domain $\Om\subset \C$ containing $z_0$ and with piecewise $C^1$ boundary $\partial \Om$, and any $C^1$ function $\psi$
the Cauchy-formula gives 
\beq \label{Cauchy}
\psi(z_0)=\frac1{2\pi i}\int_{\Omega}\frac{\pbz \psi}{z-z_0} dz\wedge d\bar z
+\frac1{2\pi i}\int_{\partial \Om}\frac{\psi}{z-z_0} dz. \eeq

Keeping in mind the final goal of getting a bound on $\norm{\Psi}_{L^1(\de\text{-thin}(\Col))}$ in terms of $\norm{\bar\partial\Psi}_{L^1}$, $\alpha$ and 
a \textit{small} multiple of $\norm{\Psi}_{L^1}$ we would like to choose the domains $\Om$ in such a way that the boundary integrals in \eqref{Cauchy} are essentially given in terms of
the mean values $\alpha(\cdot)$.
Working on \textit{large} rectangles this can be easily achieved for the integrals along lines in the $\th$ direction. Furthermore, applying 
the Cauchy-formula not just for one such rectangle, but rather taking its mean value over a suitable family of rectangles, also the integrals along lines in the $s$ direction will be essentially 
controlled in terms of $\alpha$. We are able to control the first integral in \eqref{Cauchy} in $L^1$ provided we choose the size of these rectangles dependent on the (large) factor $\rho^{-1}(s_0)$ 
with which $\pbz\psi$ appears in $\norm{\bar \partial \Psi}_{L^1}$.

Let now $\Col(\ell)\cong((-X(\ell),X(\ell))\times S^1,\rho^2(ds^2+d\theta^2))$ be a hyperbolic collar around a closed geodesic of length $0<\ell<2\de_0$. 
Then for each point $z_0=(s_0,\th_0)\in (-X_{\de_0},X_{\de_0})\times [0,2\pi]$, representing a point in the $\de_0$-thin part of the collar,
we consider the family of rectangles
$$\Om_b(z_0):=\{z_0\}+[-\rho^{-1/2}(s_0),\rho^{-1/2}(s_0)]\times [-(\rho^{-1/2}(s_0)+b),\rho^{-1/2}(s_0)+b],b\in[0,2\pi]$$
and apply the Cauchy-formula to write
\beq\label{eq:Cauchy-psi}
2\pi i \psi(z_0)=I_\Om(z_0,b)+I_V^{+}(z_0,b)+I_V^{-}(z_0,b)+I_H^{+}(z_0,b)+I_H^{-}(z_0,b)\eeq
where $I_\Om(z_0,b)=\int_{\Om_b(z_0)}\frac{\pbz \psi}{z-z_0} dz\wedge d\bar z$ while 
$I_H$, $I_V$ denote the line integrals along the horizontal respectively vertical paths of $\partial \Om_b$, that is
$$I_H^{\pm}(z_0,b)=\mp\int_{h^-(s_0)}^{h^+(s_0)}\frac{\psi(s,\th_0\pm(\rho^{-1/2}(s_0)+b))}{s-s_0\pm i(\rho^{-1/2}(s_0)+b)} ds$$
and 
$$I_V^{\pm}(z_0,b)=\pm i\int_{\th_0-(\rho^{-1/2}(s_0)+b)}^{\theta_0+\rho^{-1/2}(s_0)+b}\frac{\psi(h^{\pm}(s_0),\theta)}{\pm \rho^{-1/2}(s_0)+i(\theta-\theta_0)} d\theta.$$
Here and in the following we write for short $h^{\pm}(s_0):=s_0\pm\rho^{-1/2}(s_0)$ to denote the $s$ limits of the domains of integration. 

Remark that to obtain estimates on $\psi$, be it pointwise or in an $L^1$ sense, it is sufficient to prove estimates on the mean values with respect to $b$ of all these integrals. 
While for the terms $I_\Om$ and $I_V^{\pm}$ it is equally simple/difficult to derive bounds on these terms for each individual $b$, taking such an average over $b$ is crucial in order to 
bound the integrals $I_H^\pm$ along horizontal lines in terms of $\alpha$ which is a mean value in $\th$ and not in $s$.

With this in mind, we bound 
\beqa\label{est:Cauchy-psi}
\norm{\Psi}_{L^1(\de\thin(\Col,g))}  &=\frac{1}{\pi}\int_{-X_\de}^{X_\de}\int_{S^1}\abs{\fint_0^{2\pi} \big(I_\Om+I_V^{+}+I_V^{-}+I_H^{+}+I_H^{-}\big)(z_0,b) db} d\th_0ds_0\\
&\leq \frac{1}{\pi}\sup_{b\in[0,2\pi]} \int_{-X_\de}^{X_\de}\int_{S^1}\abs{I_\Om(z_0,b)}+\abs{I_V^{+}(z_0,b)}+\abs{I_V^{-}(z_0,b)} d\th_0ds_0\\
&+\frac1{\pi}\int_{-X_\de}^{X_\de}\int_{S^1} \abs{\fint_0^{2\pi}I_H^{+}(z_0,b) db}+\abs{\fint_0^{2\pi}I_H^{-}(z_0,b) db} d\th_0ds_0
\eeqa
and estimate all terms occurring in this formula individually. As remarked above, we may always assume that $0<\de<\de_0$ for a small fixed number $\de_0>0$, which we
chose in particular so that the following remark holds true.

\begin{rmk}\label{rem}
For $\de_0>0$ sufficiently small the domains $\Om_b$ have been chosen in such a way that for each collar $\Col=\Col(\ell)$ with $0<\ell<2\de_0$ and each  $0<\de\leq\de_0$:
\begin{enumerate}
 \item[(i)] For every $z_0\in\de\text{-thin}(\Col)$ the points in $\Om_b(z_0)$ all correspond to points in the $2\de$-thin part of $\Col$, i.e.~ 
$\Om_b(z_0)\subset (-X_{2\de},X_{2\de})\times \R$.
\item[(ii)] There is a constant $C=C_{\de_0}<\infty$ depending only on $\de_0$ such that 
$$C^{-1}\rho(s)\leq \rho(s_0)\leq C\rho(s) \text{ for all } z=(s,\th)\in \Om_b(z_0), z_0=(s_0,\th_0)\in\de_0\text{-thin}(\Col).$$
\item[(iii)] The functions $h^{\pm}(s)=s\pm\rho(s)^{-1/2}(s)$ are invertible on $[X_{-\de_0},X_{\de_0}]$, the derivatives of the inverses uniformly 
bounded and 
$((h^-)^{-1}-(h^+)^{-1})(s)\leq C_{\de_0}\rho^{-1/2}(s)$.
\end{enumerate}
\end{rmk}

The main observation leading to the first statement of the remark is that the expression \eqref{eq:Xdelta} for $X_\de$ 
combined with the mean value theorem 
implies that for small values of $\de$ the difference $X_{2\de}-X_{\de}\geq \frac{\pi\cdot\sinh(\ell/2)}{\ell\cdot \sinh(\de)}\geq c \de^{-1}$ 
is much larger than $\rho^{-1/2}(X_\de)\leq C \de^{-1/2}$, compare \eqref{est:rho-Xde}.
The second remark is then a simple consequence of the first and of \eqref{est:rho-Xde}. For the final claim, we observe that the derivative of $\rho^{-1}$ is 
uniformly bounded so that $(\rho^{-1/2})'\leq C \rho^{1/2}$ is small in the $\de_0$-thin part of the collar that we consider, and thus the derivatives of $h^{\pm}$ are close to one.

Turning back to \eqref{est:Cauchy-psi}, we first estimate the term involving the antiholomorphic derivative (i.e. involving $I_\Om$). 
Let $N(s_0):=\big[\frac{\rho^{-1/2}(s_0)}{2\pi}\big]$ and remark that the domain $\Om_b(z_0)$ 
can be wrapped around the cylinder $[h^-(s_0),h^+(s_0)]\times S^1$ no more than $2(N(s_0)+2)$ times.
Using the periodicity of $\psi$ we can thus estimate for each $b\in [0,2\pi]$
\beqa
\abs{I_{\Om}(z_0,b)}&\leq 2\int_{\Om_{2\pi}(z_0)}\abs{\frac{\pbz \psi}{z-z_0}} d\theta ds \\
&\leq 2\int_{z_0+[-2\pi,2\pi]^2} \abs{\frac{\pbz \psi}{z-z_0}} d\theta ds+2\int_{h^-(s_0)}^{h^+(s_0)}\int_{-2\pi}^{2\pi}\frac{\abs{\pbz\psi}}{2\pi} d\theta ds\\
&+4\sum_{k=2}^{N(s_0)+2} \frac{1}{2\pi(k-1)}\cdot 
\int_{h^-(s_0)}^{h^+(s_0)}\int_{S^1}\abs{\pbz\psi} d\th ds\\
&\leq 
2\int_{z_0+[-2\pi,2\pi]^2}\abs{\frac{\pbz \psi}{z-z_0}} d\theta ds+
C \cdot  
\int_{h^-(s_0)}^{h^+(s_0)}\int_{S^1}\abs{\pbz \psi} \cdot\log(\rho^{-1}(s))d\theta ds 
\eeqa
with the last inequality due to the bound $1+\sum_{k=1}^{N(s_0)+1}\frac1k\leq C\cdot \log(N(s_0)+1)\leq C\log(\rho^{-1}(s))$ being valid for every $s\in[h^-(s_0),h^+(s_0)]$, see Remark \ref{rem} (ii), if $\de_0$ is small enough.

Integrating this estimate over $z_0\in \de\text{-thin}(\Col)=(-X_\de,X_\de)\times S^1$ using Fubini's theorem as well as (i) and (iii) of Remark \ref{rem},  we thus obtain that for any $b\in [0,2\pi]$, we have
\beqa \label{est:I_Om}
\int_{\de\text{-thin}(\Col)}\abs{I_\Om(z_0,b)} ds_0d\th_0
&\leq  2\int_{-X_\de-2\pi}^{X_\de+2\pi}\int_{S^1}\bigg[ \abs{\pbz \psi} \cdot \int_{z+[-2\pi,2\pi]^2}\frac{1}{\abs{z-z_0}}ds_0d\th_0\bigg]d\theta ds \\
&\, +C \int_{-X_{2\de}}^{X_{2\de}}\int_{S^1} \bigg[\log(\rho^{-1})\cdot \abs{\pbz \psi}\cdot \big(\int_{(h^+)^{-1}(s)}^{(h^-)^{-1}(s)}1 ds_0\big)\bigg]d\theta ds \\
&\leq C\int_{-X_{2\de}}^{X_{2\de}}\int_{S^1}  \abs{\pbz \psi} \cdot \big(\rho^{-1/2}\log(\rho^{-1})+1\big)d\th ds\\
&\leq C\int_{-X_{2\de}}^{X_{2\de}}\int_{S^1}  \abs{\pbz \psi} \rho^{-1}d\th ds= C\norm{\bar \partial\Psi}_{L^1(2\de\thin(\Col))}.
\eeqa

Next we estimate the line integrals $I_V^{\pm}$ over vertical paths. Since we handle both terms in the same way, here we demonstrate the argument only by treating $I_V^{+}$.

For any $b\in [0,2\pi]$ we split $I_V^+(z_0,b)$ into integrals over the line segments 
$I_k=\{h^+(s_0)\}\times\big(\{\theta_0+2\pi\cdot k\}+[0,2\pi]\big)$, $\abs{k}\leq N(s_0)$, with $N(s_0)$ as above,  
and a small remainder term that is bounded by $C\rho^{1/2}(s_0)\int_{S^1}\abs{\psi(h^+(s_0),\theta)} d\theta$.

The important observation is that $\frac{1}{z-z_0}$ is nearly constant over each such $I_k$ and consequently that the corresponding integrals are essentially given by multiples of the mean values
$$\alpha(h^+(s_0))=\fint_{S^1}\psi(h^+(s_0),\th)d\th=\fint_{I_k}\psi.$$
More precisely, we have
\beq\label{est:error-mv} \sup_{z\in I_k}\textstyle{\abs{\frac1{z-z_0}-\frac1{\rho^{-1/2}(s_0)+2\pi i\cdot k}}}\leq 2\pi\rho(s_0),\eeq
so that for each $k$
\beqa 
\abs{\int_{I_k}\frac{\psi(z)}{z-z_0}dz}
&\leq 2\pi \rho^{1/2}(s_0)\cdot \abs{\alpha(h^+(s_0))} +C\rho(s_0)\cdot \int_{S^1}\abs{\psi(h^+(s_0),\theta)} d\theta.
\eeqa
Summing up these $2N(s_0)+1\leq C\rho^{-1/2}(s_0)$ integrals, we thus find that for each $b\in[0,2\pi]$
$$\abs{I_V^{+}(z_0,b)}\leq C\cdot \abs{\alpha(h^+(s_0))}+C\rho^{1/2}(s_0)\cdot \int_{S^1}\abs{\psi(h^+(s_0),\theta)} d\theta.$$

Integrating these estimates over $z_0$ in the $\de$-thin part of the collar and using Remark \ref{rem} (i) and (iii) as well as \eqref{est:rho-Xde} 
we thus find that for any $b\in[0,2\pi]$
\beqa \label{est:I_V}
\int_{\de\text{-thin}(\Col)}\abs{I_V^+(z_0,b)} ds_0d\th_0&\leq C\int_{-X_{2\de}}^{X_{2\de}}\abs{\alpha(s)} ds+C\left(\sup_{s\in[-X_\de,X_\de]}\rho^{1/2}(s)\right)
\cdot \norm{\psi}_{L^1(2\de\thin(\Col))}\\
&\leq C
\int_{-X}^X\abs{\alpha(s)} ds+C\delta^{1/2}\norm{\Psi}_{L^1(M,g)}.
\eeqa

We finally derive estimates for the integrals $I_H^{\pm}(z_0,b)$ over the horizontal paths, now not for each individual $b$ but rather for the mean values over $b\in [0,2\pi]$.
We treat the term $I_H^+$, with $I_H^-$ handled similarly. 
By Fubini's theorem
$$\abs{\fint_0^{2\pi}I_H^+(z_0,b) db}\leq \int_{h^-(s_0)}^{h^+(s_0)}\abs{\fint_{\{\th_0+\rho^{-1/2}(s_0)\}+[0,2\pi]}\frac{\psi(s,\th)}{z-z_0} d\th} ds.$$

Using an estimate similar to \eqref{est:error-mv} we now write the interior integrals as $\frac{\alpha(s)}{s-s_0+i\rho^{-1/2}(s_0)}$ plus a small error term, resulting in
\beqa\label{est:IH2}
\abs{\fint_0^{2\pi}I_H^+(z_0,b) db}&\leq \rho^{1/2}(s_0)\cdot \int_{h^-(s_0)}^{h^+(s_0)} \abs{\alpha(s)}ds+C\rho(s_0)\int_{h^-(s_0)}^{h^+(s_0)} \int_0^{2\pi}\abs{\psi} d\th ds\\
&\leq  C\cdot \int_{h^-(s_0)}^{h^+(s_0)} \rho^{1/2}(s)\abs{\alpha(s)}ds+C\rho^{1/2}(s_0)\int_{h^-(s_0)}^{h^+(s_0)} \int_0^{2\pi}\rho^{1/2}(s)\abs{\psi} d\th ds,
\eeqa
where we once more use Remark \ref{rem} (ii) in the last step. Thanks to Remark \ref{rem}, after integration over $z_0\in\de\thin(\Col)$ this gives
\beqa\label{est:I_H}
\int_{-X_\de}^{X_\de}\int_{S^1}\abs{\fint_0^{2\pi}I_H^+(z_0,b)db} d\theta_0ds_0&\leq C\int_{-X_{2\de}}^{X_{2\de}}\abs{\alpha(s)} \rho^{1/2}(s)\cdot \big((h^-)^{-1}-(h^+)^{-1}\big)(s) ds\\
&+C\de^{1/2} \int_{-X_{2\de}}^{X_{2\de}}\int_{S^1}\abs{\psi(s,\theta)} \rho^{1/2}(s)\cdot ((h^-)^{-1}-(h^+)^{-1})(s)d\theta ds \\
&\leq C\int_{-X}^X\abs{\alpha(s)}ds+C\de^{1/2}\norm{\Psi}_{L^1(M,g)}.
\eeqa

Inserting these three estimates \eqref{est:I_Om}, \eqref{est:I_V} and \eqref{est:I_H} into \eqref{est:Cauchy-psi} immediately gives the claim of Lemma \ref{lemma:L1-est-using-alpha}. 
\end{proof}

{\sc Max-Planck-Institut f\"ur Gravitationsphysik, Am M\"uhlenberg 1, 14476 Golm, Germany}

{\sc Mathematics Institute, University of Warwick, Coventry,
CV4 7AL, UK}

\end{document}